\DeclareMathOperator{\Hc}{ \textrm{ch} }
\def\1{{\mathbf 1}}
\def\pa{\partial}
\def\a{\alpha}
\def\b{\beta}
\def\d{\delta}
\def\e{\epsilon}
\def\N{{\mathbb N}}
\def\Z{{\mathbb Z}}
\def\R{{\mathbb R}}
\def\P{{\mathbb P}}
\def\E{{\mathbb E}}
\def\Hc{{\mathcal H}}
\def\Lc{{\mathcal L}}
\def\Pc{{\mathcal P}}
\def\Vc{{\mathcal V}}
\numberwithin{equation}{section}
\newtheorem{thm}{Theorem}[section]
\newtheorem{proposition}{Proposition}[section]
\newtheorem{definition}{Definition}[section]
\newtheorem{example}{Example}[section]
\newtheorem{lemma}{Lemma}[section]
\newtheorem{rmk}{Remark}[section]
\newcommand{\ignore}[1]{}
\newcommand{\vertiii}[1]{{\left\vert\kern-0.25ex\left\vert\kern-0.25ex\left\vert #1 
    \right\vert\kern-0.25ex\right\vert\kern-0.25ex\right\vert}}
\newcommand\smallO{
  \mathchoice
    {{\scriptstyle\mathcal{O}}}
    {{\scriptstyle\mathcal{O}}}
    {{\scriptscriptstyle\mathcal{O}}}
    {\scalebox{.7}{$\scriptscriptstyle\mathcal{O}$}}
  }
\title{A PDE approach for regret bounds under partial monitoring} 
\author{Erhan Bayraktar}\thanks{E. Bayraktar is partially supported by the National Science Foundation under grant DMS-2106556 and by
the Susan M. Smith chair.}
\address{Department of Mathematics, University of Michigan}
\email{erhan@umich.edu}
\author{Ibrahim Ekren}\thanks{I. Ekren is supported in part by NSF Grant DMS 2007826.}
\address{Department of Mathematics, Florida State University}
\email{iekren@fsu.edu}
\author{Xin Zhang} 
\address{Department of Mathematics, University of Vienna}
\email{xin.zhang@univie.ac.at}
\begin{document}

\maketitle

\begin{abstract}
In this paper, we study a learning problem in which a forecaster only observes
partial information. By properly rescaling the problem, we heuristically derive a limiting PDE on Wasserstein space which characterizes the asymptotic behavior of the regret of the forecaster. Using a verification type argument, we show that the problem of obtaining regret bounds and efficient algorithms can be tackled by finding appropriate smooth sub/supersolutions of this parabolic PDE.
\end{abstract}

\section{Introduction}

In this paper, we study a zero-sum game between a forecaster and an adversary. At each round, the forecaster chooses an action between $K\geq 2$ alternative actions based on his partial observations aiming at performing as well as the best constant strategy, while the adversary aims at maximizing the forecaster's regret. Our problem is motivated by \emph{prediction with expert advice}  and  \emph{bandit problems} (see e.g. \cite{cesa2006prediction,BubeckCesaBianchi2012}), which are fundamental problems in online learning and sequential decision making. The main difference between prediction with expert advice and bandit problem is the information observed by the forecaster. In prediction with expert advice problems, the forecaster can monitor the outcomes of each alternative action, whereas in bandit problems, the forecaster can only observe the outcome of the action chosen. Thus, the former problem is a full information game whereas the latter is a bandit game (see e.g. \cite{audibert2010regret,audibert2011minimax}).

The most commonly used algorithm for decision making and prediction problem is the so-called \emph{multiplicative weights algorithm}, which assigns initial weights to each expert, update these weights multiplicatively and iteratively based on their performance, and randomly choose experts according to their weights. This simple algorithm is widely used and has been proven efficient in practice. However, it cannot provide accurate regret bounds and best strategies for the forecaster. In \cite{MR4053484}, techniques from partial differential equations were first employed to understand asymptotic behavior of prediction of expert advice problems. Since then, it became popular and has been proven powerful in certain problems, see e.g. \cite{https://doi.org/10.1002/cpa.22071,2022arXiv220205767K,pmlr-v125-kobzar20a,10.5555/3546258.3546330,MR4187120,MR4120922,9203984,9317932,2022arXiv220107877Z,pmlr-v167-greenstreet22a}.

In full information games, these papers rely on the fact that the difference $(X^i_t)_{i=1,\ldots, N}=(G^i_t-G_t)_{i=1,\ldots, N}\in \R^K$ between the gain $G_t$ of the forecaster and the gain $G^i_t$ of each action $i$ is a natural state variable for the dynamic game between the forecaster and the adversary. Thus, the minimax regret of the forecaster satisfies a finite dimensional dynamic programming principle whose scaling limit is a parabolic partial differential equation on $\R^N$. For bandit games or in the presence of partial information such methodology cannot be applied. Indeed, due to partial information, the natural state variable for the dynamic programming principle is the set of probability distributions on $\R^N$ which encodes the distribution $m_t$ of $X_t$ conditional on the information of the forecaster. Thus, with partial information, the fundamental problem is to understand the dynamics of $m_t$ and how these dynamics behave in the long-time regime. 

Our main contribution consists in showing that the update of the conditional distribution between two consecutive time steps from $m_t$ to $m_{t+1}$ admits a scaling limit that can be described using partial differential equations in the Wasserstein space. The equations we obtain are fully nonlinear versions of the PDEs appearing in mean-field games and Mckean-Vlasov control problems, see e.g. \cite{cdll2019,cosso2021master,MR3907014}. This novel relation between the discrete-time bandit problem and the continuous-time equations comes from the fact that in the game we study, the updated measure $m_{t+1}$ can be written as a push-forward operator on $m_t$, i.e. $m_{t+1}=(Id+Y_t)\sharp m_t$ where $Y_t$ is a (random) function describing the feature learned by the forecaster on $[t,t+1]$. If the game is played $T$ times and if we rescale the problem with its natural $\sqrt{T}$ scaling, the update of the $m_t$ can be written as $m_{t+\frac{1}{T}}=(Id+\frac{Y_t}{\sqrt{T}})\sharp m_t$. In the long-time regime, i.e., as $T\to\infty$, by the definition of the Wasserstein derivative (see \cite[Proposition 2.3]{cdll2019}), we obtain that for any smooth function $U$, we have the expansion $$U\left(m_{t+\frac{1}{T}}\right)=U\left(m_t\right)+ \frac{1}{\sqrt{T}}\int D_m U(m_t,x)Y_t(x) \, m(dx)+\smallO \left(\frac{1}{\sqrt{T}}\right).$$

Thus, the impact of the Bayesian update of the distribution $m_t$ can be characterized in the long-time regime using the Wasserstein derivative $D_m U$. In fact, we derive a second order expansion of $U(m_{t+\frac{1}{T}})$ involving the derivatives $D_xD_mU$ and $D_{mm}^2 U$ which allows us to heuristically exhibit a second order parabolic equation of type 
{\small\begin{align}\label{eq:pdeintro}
0&=\pa_t U(t,m)\\
&+F\left(\int D_m U(t,m,x)m(dx),\int D_xD_m U(t,m,x)m(dx),\iint D^2_{mm} U(t,m,x,y)m(dx)m(dy)\right)\notag
\end{align}}

\noindent which is expected to govern the dynamics of the prediction problem in the long-time regime. In this equation, the unknown is the function $U$, $F$ is a function that can be explicitly computed from the Bayes' rule, and the derivatives are defined as in \cite{cdll2019,cosso2021master}.

The equation \eqref{eq:pdeintro} gives simple methods to obtain algorithms and regret bounds for the long-time regime of the prediction problem with partial information. Indeed, using a verification type argument we show that the gradient  $D_m U$ of any smooth supersolution $\phi$ of \eqref{eq:pdeintro} satisfying some growth condition yields to an algorithm that guarantees an upper bound for regret of order $\phi(0,\delta_0)\sqrt{T}$ where $\delta_0$ is the Dirac mass at $0$. A similar result also holds for appropriate subsolutions. 

Due to the nonlinearity on the second derivative term $D_{mm}^2 U$, wellposedness of viscosity solutions for \eqref{eq:pdeintro} is not available in the literature. 
Hence, the questions of establishing appropriate comparison result for viscosity solutions and obtaining the exact growth of the regret as for example in \cite{MR4053484} are left for future research.

The rest of paper is organized as follows. In Section 2, we formulate our problem and show that the value function of the game depends only on the law $m_t$ of $X_t$ conditional on the information of the agents. Then, using Bayes' rule, we compute explicitly the update of beliefs and prove a dynamic programming principle. In Section 3, by properly rescaling the value function and using differential calculus on the space of measures, we heuristically obtain a limiting PDE of type \eqref{eq:pdeintro} on the Wasserstein space. In Section 4 and 5, using smooth supersolutions and subsolutions of the PDE, we construct strategies for the forecaster and the adversary, and find upper and lower bounds of expected regret.

\subsection{Notations}
For any positive integer $K$, define $[K]=\{1,\dotso, K\}$, and $\mathcal{S}_K$ to be the set of positive semidefiniete $K\times K$ matrices.  $Id$ stands for the identity mapping of appropriate dimension. For any $x \in \R^K$, denote its $i$-th coordinate by $x^i$. Let $\{e_i: \, i=1,\dotso, K\}$ be the canonical basis of $\R^K$, and for any $j \subset [K]$, denote $e_j=\sum_{i, i \in j} e_i$ and $e=\sum_{i=1}^Ke_i$.

We fix $K\geq 2$ and denote by $\Pc_2(\R^K)$ the set of probability measures $ m $ on $\R^K$ such that $\int |x|^2  \, m (dx)<\infty$.  For any $v\in \R^K$, $\lambda\in \R$, and $ m \in \Pc_2(\R^K)$, we define the measures $ m _{\sharp v}:=(Id+v)\sharp  m $ and $m ^{*\lambda}$ via
\begin{align*}
\int f(x)  \, m _{\sharp v}(dx)&=\int f(x+v) \, m  (dx),\\
 \int f(x) \, m ^{*\lambda}(dx)&=\int f(\lambda x) \, m  (dx)\mbox{ for all }f\mbox{ continuous and bounded}.
\end{align*}
Additionally, for any function $f$ and $m\in \Pc_2(\R^K)$, we denote
\begin{align*}
f([m]):=\int f(x) \, m(dx) .
\end{align*}

\section{Formulation of the problem}

Our online prediction problem with partial observation can be described as a $T$-round game, played by a forecaster in an adversarial environment. Suppose that there are $K$ actions. At each round $t$, the forecaster chooses an action $I_t \in [K]$, and independently the adversary chooses the reward $J^i_t$ of action $i$ to be $0$ or $1$, i.e., $J_t \in \{0,1\}^K$. Then the total gain of the forecaster $G_t$ and the total gain $G_t^i$ of action $i$ evolve as 
\begin{align*}
&G_{t+1}-G_t= \1_{I_t \in J_t }, \\
&G^i_{t+1}-G^i_t=\1_{i \in J_t}, \, \, \,  i=1,\dots,K. 
\end{align*}
The goal of the forecaster is to design a robust strategy that performs as well as the best constant strategy under any adversarial environment, i.e., to minimize $ \max\limits_{Adversary} \E[ \max_i X_T^i]$, where $X_t^i:=G^i_t-G_t$ is the state variable evolving as 
\begin{align*}
X_{t+1}-X_{t}:=e_{J_{t}}-\1_{I_{t} \in J_{t}}  e  \in \R^K.
\end{align*}
Both the forecaster and the adversary are allowed to adopt randomized strategies. At each round $t$, they decide on  distributions $b_t$ of $I_t$ and $a_t$ of $J_t$ respectively. If we allow both agents to observe the outcomes of $I_t$ and $J_t$, this problem is the classical prediction with expert advice problem in the adversarial setting, see for example \cite{cover1966behavior,cesa2006prediction,peres,MR4053484}.

Let us now describe information observed by the forecaster and his admissible strategies in the partial information problem we aim to study. At initial time $t=0$, both the adversary and the forecaster get informed of the distribution $m_0$ of $X_0$. For any $t \geq 0$, the random variable
\begin{align*}
Y_t:= \1_{I_{t} \in J_{t}} {I_{t}}- \1_{I_{t} \not \in J_{t}} {I_{t}} \in \{\pm i \}
\end{align*} 
indicates whether the forecaster makes a good decision or not. Both players can observe the law of adversary's control $a_{t-1}$ and the indicator $y_{t-1}$. Their accumulated information is given by 
\begin{align*}
h_t:=(m_0, a_0, y_0 \dotso, a_{t-1}, y_{t-1}) \in \mathcal{H}_t, \quad (h_0:=m_0 \in \mathcal{H}_0),
\end{align*}
where $\mathcal{H}_t:=\Pc(\R^K) \times \left( \Pc(\{0,1\}^K) \times \{\pm i \}\right)^{t}.$ 
The strategies of the forecaster and the adversary are measurable functions $\b_t: \mathcal{H}_t \to \Pc([K])$ and $\a_t: \mathcal{H}_t \to \Pc(\{0,1\}^K)$ respectively. Define $\mathcal{A}$ to be the set of all possible strategies $\alpha:=(\alpha_0,\alpha_1,\dotso, \alpha_{T-1})$, and $\mathcal{B}$ similarly.

Suppose this game starts from time $t$ with an initial distribution $m \in \Pc(\R^K)$. Then given any strategies $\alpha \in \mathcal{A}$, $\beta \in \mathcal{B}$, the regret for the forecaster is given by 
\begin{align*}
\gamma_T(t,m, \a, \b):= \E^{m ,\a , \b} [ \max_{i} X^i_T \, | \, X_t \sim m].
\end{align*} 
From the perspective of the forecaster, we aim at solving a minimax problem 
\begin{align}\label{eq:minmax}
v_T(t,m):= \inf_{\beta \in \mathcal{B}} \sup_{\a \in \mathcal{A}} \gamma_T (t,m, \a, \b),
\end{align}
and we denote this two player game by $\Gamma_T(t,m)$. 

\begin{rmk}
(i) This formulation is motivated by the classical bandit problems, see for example \cite{cesa2006prediction,BubeckCesaBianchi2012}. Similar to the bandit problems, both agents have access to the same partial information and they simultaneously choose their strategies to be played at each round. Before each round, our adversary has the same information as the non-oblivious adversary of \cite{audibert2010regret}. However, unlike the bandit problems where the agents only observe $Y_t$, after each round, the forecaster also learns the strategy $a_t$ chosen by the adversary in the previous round. Therefore, both players does not get access to state variable $X_t$, but still can compute the same update of its law conditional to a common filtration. 

(ii) We note that the observation of $a_t$ by the forecaster has a practical implication in terms of design of recommender systems. In the problem that we envision, the forecaster recommends successively one item among $K$ alternatives to a sequence of users. The forecaster's strategy is the choice of a randomization $b_t\in \Pc([K])$ which leads to the realization of a random variable $I_t\in [K]$ representing the recommendation of the forecaster. A sequence of users arrive and decide whether or not they accept the successive recommendations of the forecaster. We identify each user with $a_t\in \Pc(\{0,1\}^K)$ so that the realization of random variable $J_t \in \{0,1\}^K$ represents the random choice that each successive user makes. Thus, unlike stochastic bandit problems, $a_t$ is chosen adversarially at each round and learned by the forecaster after the round. In this context, the assumption that the forecaster learns $a_t$ means that after proposing $I_t$, the forecaster learns the identity of the user $a_t$.

(ii) In our context, since the forecaster learns $a_t$, he can update $m_t$ via Bayes' rule. This update is impossible in the classical bandit problems. An interesting question that is left for future research is to extend our PDE tools to allow such an uncertainty on the update procedure of the conditional distribution.

\end{rmk}

\subsection{Dynamic programming principle}

In this subsection, we establish the dynamic programming principle for the game \eqref{eq:minmax}, and reduce controls $\a,\b$ to functions of conditional distribution of the state $X$. Let us first compute the distribution of $X$, i.e., belief, given prior information. Suppose the current distribution is $m$ and $X$ is a random variable with distribution $m$. We denote $\Delta X$ the change of $X$ between two rounds. The players choose strategies $a \in \Pc(\{0,1\}^K)$ and $b \in \Pc([K])$ respectively, and receive signal $y \in \{ \pm i \}$. We denote by $\Lc^{a,b}$ the distribution of a random variable and by $\P^{a,b}$ the probability of an event given the strategies of the agents. We omit the superscripts $a$ or $b$ if this dependence is clear from the context. We also denote by $$ l (m,a,y):= \mathcal{L}^{a}( X+\Delta X  |  X \sim m,  Y=y) \in \Pc(\R^K)$$ the Bayesian update of the distribution. 

We will compute the explicit formula of $l(m,a,y)$ in the next Lemma.  For any $a \in \Pc(\{0,1\}^K)$, $b \in \Pc([K]) $, denote 
\begin{align*}
\hat a(i):= \sum_{j, i\in j} a(j),\quad \hat a(-i):= \sum_{j, i\notin j} a(j),  \quad \forall \, i \in [K].
\end{align*}

\begin{proposition}\label{prop:updatebelief}
Given $a \in \Pc(\{0,1\}^K)$ and the distribution $m \in \Pc(\R^K)$, we have that  
\begin{align*}
l(m,a,i)=\sum_{j,i \in j} \left(\frac{a(j)}{\hat a(i)}m\right)_{\sharp -e_{j^c}}, \quad l(m,a,-i)=\sum_{j,i \notin j} \left(\frac{a(j)}{\hat a(-i)}m\right)_{\sharp e_j}.
\end{align*}
We make the convention in these expressions that $l(m,a,y)= \delta_{\mathbf{0}} \in \Pc(\R^K)$ whenever $\hat a(y)=0$. 
\end{proposition}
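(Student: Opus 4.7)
The plan is to compute $l(m,a,y)$ directly via Bayes' rule applied to the joint law of $(X, I, J)$, exploiting the independence structure built into one round of the game. Conditional on the history, the forecaster draws $I \sim b$ and the adversary draws $J \sim a$ independently of each other and of the state $X \sim m$, so $X \perp (I, J)$; since $Y$ is $\sigma(I, J)$-measurable, this strengthens to $X \perp (J, Y)$. In particular, conditional on $\{Y = y\}$, the marginal of $X$ stays equal to $m$ and $X$ remains independent of $J$, so the only non-trivial conditioning is that of $J$.

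Next I would invert Bayes' rule on $J$. Using $I \perp J$,
\begin{align*}
\P(Y = i) &= \P(I=i)\,\P(i \in J) = b(i)\,\hat a(i),\\
\P(J=j,\,Y=i) &= b(i)\,a(j)\,\1_{i\in j},
\end{align*}
so $\P(J = j \mid Y = i) = a(j)\,\1_{i\in j}/\hat a(i)$; note that the $b$ factor cancels, which is natural since the signal $Y = i$ already identifies $I = i$. Symmetrically one gets $\P(J = j \mid Y = -i) = a(j)\,\1_{i\notin j}/\hat a(-i)$. The convention $l(m,a,y) := \delta_{\mathbf 0}$ when $\hat a(y) = 0$ covers the null event where the signal value is unreachable and the conditional law is undefined.

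To close the argument I would rewrite $\Delta X = e_J - \1_{I \in J}\,e$ as a deterministic function of $J$ on each signal event: on $\{Y = i\}$ one has $I = i \in J$ and hence $\Delta X = e_J - e = -e_{J^c}$, while on $\{Y = -i\}$ one has $i \notin J$ and hence $\Delta X = e_J$. Disintegrating $\Lc^a(X + \Delta X \mid Y = i)$ over the posterior of $J$ and using the conditional independence of $X$ and $J$ together with the definition of the push-forward $\cdot_{\sharp v}$ yields
\[ l(m, a, i) = \sum_{j:\, i \in j} \frac{a(j)}{\hat a(i)}\,m_{\sharp -e_{j^c}}, \]
and the analogous calculation produces the identity for $l(m, a, -i)$. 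The argument is essentially bookkeeping; the only point deserving care is the independence $X \perp (J, Y)$, which is precisely what guarantees that conditioning on the signal leaves the marginal of $X$ undisturbed and reduces the problem to a pure Bayes calculation on the discrete variable $J$.
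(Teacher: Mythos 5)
Your proposal is correct and follows essentially the same Bayes-rule computation as the paper: you condition on $J$ and use the independence $X\perp (J,Y)$ to keep the marginal of $X$ at $m$, while the paper writes out the joint law of $(\Delta X, Y, X)$ directly before normalizing by $\P(Y=\pm i)$. The two derivations are the same calculation with slightly different bookkeeping, and both arrive at the identical posterior push-forward formulas.
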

\begin{proof}
For $j\subset [K]$ and $i\in [K]$, it can be easily verified that
\begin{align*}
\P(\Delta X=e_j, Y=-i, X\in dx)&=\1_{i\notin j}a(j)b(i)m(dx) \\
\P(\Delta X=-e_{j^c}, Y=i, X\in dx)&=\1_{i\in j}a(j)b(i)m(dx) \\
\P( Y=i)&=b(i)\sum_{k, i\in k}a(k)\\
\P(Y=-i)&=b(i)\sum_{k,i\notin k}a(k) \\
\P(\Delta X=e_j, X\in dx \, | \, Y=-i)&=\frac{\1_{i\notin j}a(j)m(dx)}{\sum_{k,i\notin k}a(k)}\\
\P(\Delta X=-e_{j^c}, X\in dx \, | \,Y=i)&=\frac{\1_{i\in j}a(j)m(dx)}{\sum_{k,i\in k}a(k)}.
\end{align*}
Therefore, conditioning on $Y$, the distribution of $X+\Delta X$ is given by 
\begin{align*}
 \P \left[(X + \Delta X) \in dx \, | \,  Y =i \right] 
 &= \sum_{j, i \in j} \P \left[X \in d(x+e_{j^c}), \Delta X= -e_{j^c}  \, | \,  Y=i  \right]  \notag \\
&=\sum_{j,i \in j} \frac{a(j)m(d(x+e_{j^c}))}{\hat a(i)},\notag\\
& =\sum_{j,i \in j} \left(\frac{a(j)}{\hat a(i)}m\right)_{\sharp -e_{j^c}} (dx),
\end{align*}
and 
\begin{align*}
\P \left[(X + \Delta X) \in dx \, | \, Y =-i \right] 
&= \sum_{j,i \notin j} \P\left[X\in d(x-e_{j}), \Delta X= e_{j}  \, | \,  Y=-i  \right] \notag \\
&=\sum_{j,i \notin j} \frac{a(j)m(d(x-e_{j}))}{\hat a(-i)}\notag\\
&=\sum_{j,i \notin j} \left(\frac{a(j)}{\hat a(-i)}m\right)_{\sharp e_j} (dx).
\end{align*}
\end{proof}

The following theorem proves a dynamic programming principle showing that one can solve \eqref{eq:minmax} with a backward induction. 

\begin{thm}\label{thm:dpp}
For any distribution $m \in \Pc(\R^K)$ and $T \in \N $ we have that 
\begin{align}\label{eq:dpp}
v_T(t,m)=& \inf_{ b \in \Pc([K])} \sup_{a \in \Pc(\{0,1\}^K)} \left(\sum_{i=1}^K   b(i)\hat a(i) v_{T}(t+1, l(m,a,i)) \right. \notag \\
&+ \left. \sum_{i=1}^K  b(i) \hat a(-i) v_{T}(t+1, l(m,a,-i)) \right),
\end{align}
where $b(i) \hat a(i)$, $b(i) \hat a(-i)$ represent the probability of receiving signal $i$, $ -i$ respectively, and $l(m,a,\pm i)$ is the update of beliefs. 
\end{thm}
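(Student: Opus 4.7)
The plan is to combine a Markov reduction with a one-step conditioning argument. Because both agents share the history $h_t=(m_0,a_0,y_0,\ldots,a_{t-1},y_{t-1})$, by iteratively applying Proposition \ref{prop:updatebelief} they can both compute the posterior law $m_t:=\mathcal{L}(X_t\mid h_t)$. I would first check that $m_t$ is a sufficient statistic for the remaining game, i.e.\ conditional on $h_t$, the joint law of $(Y_t,\ldots,Y_{T-1})$ under any pair $(\alpha,\beta)$ depends on $h_t$ only through $m_t$ together with the current moves $(a_t,b_t)$. By a standard measurable-selection argument, one may then restrict without loss of generality to strategies $\alpha_s,\beta_s$ that are measurable functions of $(s,m_s)$ only, so that $v_T(t,m)$ really is the value of a game whose state is $m_t$.

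Next I would perform a one-step conditioning. From the probabilities computed in the proof of Proposition \ref{prop:updatebelief}, the signal $Y_t$ satisfies $\P^{a,b}(Y_t=\pm i)=b(i)\hat a(\pm i)$, and the posterior of $X_{t+1}$ on the event $\{Y_t=\pm i\}$ is $l(m,a,\pm i)$. Applying the tower property to $\gamma_T(t,m,\alpha,\beta)=\E^{m,\alpha,\beta}[\max_i X_T^i\mid X_t\sim m]$ and splitting the strategies into their first-round part $(b,a)$ and their continuation on rounds $\{t+1,\ldots,T-1\}$, the inner conditional expectation becomes the regret of the game starting at time $t+1$ with belief $l(m,a,\pm i)$. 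Using Step 1, that inner regret equals $v_T(t+1,l(m,a,\pm i))$ when the continuation is chosen optimally.

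Finally I would prove both inequalities in \eqref{eq:dpp}. For $\leq$: given $\ve>0$ and a putative minimizer $b$, take any $a$ and apply the measurable-maximum theorem to pick an $\ve$-optimal measurable continuation strategy $\beta_{>t}^\ve$ on $\Pc(\R^K)$ for the game at time $t+1$; plugging this back and taking $\sup_a$ then $\inf_b$ yields the right-hand side of \eqref{eq:dpp} up to $\ve$. For $\geq$: for any $\beta$, the worst-case adversary over all rounds dominates the one that acts optimally round-by-round, so the continuation sup dominates $v_T(t+1,\cdot)$ pointwise in $(a,y)$; this yields the reverse bound.

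The main obstacle is the measurable-selection/interchange step, namely swapping $\sup_{\alpha_{>t}}\E[\cdot]$ with $\E[\sup_{\alpha_{>t}}\cdot]$ (and analogously for $\inf_{\beta_{>t}}$) inside the sum over the finitely many values of $Y_t$. Fortunately, since $Y_t$ takes only $2K$ values with probabilities that are continuous in $(a,b)$, and since the map $(m,a,y)\mapsto l(m,a,y)$ is Borel measurable, the interchange follows from a routine measurable-selection argument (together with compactness of $\Pc([K])$ and $\Pc(\{0,1\}^K)$), rather than requiring Sion's minimax on the outer $(a,b)$-game.
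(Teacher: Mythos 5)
Your argument follows the same two-sided, one-step decomposition as the paper's proof: the conditional identity you invoke is the paper's \eqref{eq:induction}, the lower bound $v_T(t,m)\geq v$ comes from $\sup_{\alpha_{>t}}\gamma_T(t+1,\cdot,\alpha_{>t},\beta_{>t})\geq v_T(t+1,\cdot)$ for any fixed continuation $\beta_{>t}$, and the upper bound is obtained by assembling an $\varepsilon$-optimal first move $\beta_t^*$ with an $\varepsilon$-optimal measurable continuation. The differences are mostly presentational. The paper organizes the argument as an explicit backward induction from $t=T-1$, which you suppress but still implicitly rely on when producing the measurable $\varepsilon$-optimal continuation at $t+1$; it would be cleaner to state that induction. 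Your preliminary ``Markov reduction'' to strategies depending only on $(s,m_s)$ is not needed and is really a corollary of the DPP rather than a prerequisite: the decomposition \eqref{eq:induction} already shows the one-step regret depends on $h_t$ only through $m$ and the first-round controls, while the continuation strategies are allowed to depend on the observed $(a_t,y_t)$, which is all the proof uses. The one place where you are more careful than the paper is in flagging the measurable-selection issue, which the paper dismisses with ``clearly it is measurable.''
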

\begin{proof}
The equation \eqref{eq:dpp} holds trivially for $t=T-1$. Suppose it is true for $t+1$. Let us prove it for $t$. Denote 
by $v$ the value of the right hand side of \eqref{eq:dpp}. 
For any $\a \in \mathcal{A}$ and $ \b \in \mathcal{B}$, denote $\a_{t+1:T}=\{\a_{t+1}, \dotso, \a_{T-1}\}$, $\b_{t+1:T}= \{ \b_{t+1}, \dotso, \b_{T-1}\}$. 
It is clear that 
\begin{align}\label{eq:induction}
\gamma_T(t,m, \a, \b)=& \sum_{i=1}^K  \sum_{k: i \in k} \b_t(i)\a_t(k) \gamma_{T}(t+1, l(m,\a_t,i), \a_{t+1:T},\b_{t+1:T}) \notag \\
&+ \sum_{i=1}^K  \sum_{k: i \notin k} \b_t(i)\a_t(k) \gamma_{T}(t+1, l(m,\a_t,-i), \a_{t+1:T},\b_{t+1:T}),
\end{align}
where $l(m, \a_t, \pm i)$ is the conditional distribution of $X_{t+1}$. For the game 
$\gamma_{T}(t+1, l(m,\a_t, \pm i))$, due to our induction hypothesis, the value of this game exists and is just $v_{T}(t+1,l(m,\a_t, \pm i))$. Taking supremum over $\a$ on both sides of \eqref{eq:induction}, it can be easily seen that 
\begin{align*}
\sup_{\a} \gamma_T(t,m, \a, \b) \geq & \sup_{\a_t} \left( \sum_{i=1}^K  \sum_{k: i \in k} \b_t(i)\a_t(k) v_{T}(t+1,l(m,\a_t,i))\right. \\
&+\left. \sum_{i=1}^K  \sum_{k: i \notin k} \b_t(i)\a_t(k) v_{T}(t+1,l(m,\a_t,-i)) \right).
\end{align*}
Taking infimum over $\b$, we conclude that $v_T(c,m) \geq v$.

Then we prove that for any $\e >0$, there exists a robust strategy $\b^*$ of the forecaster such that 
\begin{align}\label{eq:upp}
\sup_{\a} \gamma_T(t,m,\a,\b^*) < v+2\e. 
\end{align}
Take $\b_t^* \in \Pc([K])$ with the property that 
\begin{align*}
v+\e> & \sup_{a \in \Pc(\{0,1\}^K)} \left(\sum_{i=1}^K  \sum_{k: i \in k} \b_t^*(i)a(k) v_{T}(t+1, l(m,a,i)) \right. \notag \\
&+ \left. \sum_{i=1}^K  \sum_{k: i \notin k} \b_t^*(i)a(k) v_{T}(t+1,l(m,a,-i)) \right).
\end{align*}
By induction hypothesis, for any belief $l(m,a, \pm i)$, the forecaster can choose a strategy $\b_{t+1:T}^*$ such that 
\begin{align*}
v_{T}(t+1,l(m,a, \pm i))+\e > \sup_{\a_{t+1:T}} \gamma_{T}(t+1,l(m,a, \pm i), \a_{t+1:T}, \b_{t+1:T}^*).
\end{align*}
Taking $\b^*=(\b_t^*, \b^*_{t+1:T})$, clearly it is measurable and  satisfies \eqref{eq:upp}. 
\end{proof}

\section{Heuristic expansion of the rescaled value function}

Let us define the rescaled value functions  
\begin{align*}
u^T(s,m):=\frac{1}{\sqrt{T}} v_T\left({\lceil sT \rceil}, m^{*\sqrt{T}} \right),
\end{align*}
and equivalently 
\begin{align*}
v_T({\lceil sT \rceil},m )=\sqrt{T}u^T\left(s, m^{*\sqrt{T^{-1}}} \right).
\end{align*}
For any $a \in \Pc(\{0,1\}^K)$ and belief $ m  \in \Pc(\Z^K)$, denote 
\begin{align}\label{eq:defA}
A^{a, m }_{i,\sqrt{T}}&= \left({\sum_{j, i \in j} \left(\frac{a(j)}{\hat a(i)}  m ^{*\sqrt{T}}\right)_{\sharp -e_{j^c}} } \right)^{*\frac{1}{\sqrt{T}}}, \,  \, \, \, \,  
A^{a, m }_{-i,\sqrt{T}}&=\left(  \sum_{j, i \notin j} \left(\frac{a(j)}{\hat a(-i)}  m ^{*\sqrt{T}}\right)_{\sharp e_{j}} \right)^{*\frac{1}{\sqrt{T}}}.\end{align}
Then due to \eqref{eq:dpp}, it holds that 
\begin{align}\label{eq:dppT}
u^T\left( s-\frac{1}{T},m\right) =& \inf_{b \in \Pc([K])}  \sup_{a\in \Pc(\{0,1\}^K)}  \bigg( \sum_i b(i) \hat a(i) u^T\left(s,A^{a, m }_{i,\sqrt{T}} \right) \notag \\
&+\sum_i b(i) \hat a(-i) u^T\left( s,A^{a, m }_{-i,\sqrt{T}} \right) \bigg),
\end{align}
with the terminal condition 
\begin{align*}
u^T(1, m) = \int_{x \in \R^K} \max_{i} {x^i}  \, m(dx). 
\end{align*}

Now we want to derive a limit for \eqref{eq:dppT} as $T \to \infty$. This derivation  requires us to take derivatives in the direction of $A^{a,m}_{i,\sqrt T}-m$ and $A^{a,m}_{-i,\sqrt T}-m$ in the Wasserstein space. Let us introduce the differentiability of functions over the Wasserstein space as defined in \cite{cdll2019,carmona2018probabilistic}.

A function $u : \Pc_2(\R^K)\mapsto \R$ is said to be Fr\'{e}chet differentiable if 
there exists a continuous function
$$\frac{\d u}{\d m}:\Pc_2(\R^K)\times \R^K\mapsto \R$$ so that for all $(m,m') \in \Pc_2(\R^K)$, we have that
\begin{align*}
\lim_{h\to 0}\frac{u(m+h (m'-m))-u(m)}{h}=\int \frac{\d u}{\d m}(m,x) \, (m'-m)(dx).
\end{align*}
Whenever $\frac{\delta u}{\delta m}$ is differentiable in $x$, we also define 
\begin{align*}
D_m u(m,x)=D_x\frac{\d u}{\d m}(m,x)\in \R^K.
\end{align*}
Similarly to \cite{cdll2019,chow2019partial}, we can also define $D_xD_m u(m,x)$, $D^2_{mm}u(m,x,y)$.

\begin{definition}
A function $u: \Pc_2(\R^K) \to \R$ is said to be $\mathcal{C}^1$ if $D_m u(m ,x)$ is continuous and has at most quadratic growth in $x$, i.e., 
\begin{align*}
| D_m u(m,x)| \leq C(1+|x|^2). 
\end{align*}
It is said to be $\mathcal{C}^2$ if $D_xD_m u (m,x)$ and $D^2_{mm} u(m,x,y)$ are continuous, and have at most quadratic growth in $x$ and $(x,y)$ respectively. 
\end{definition}

It is shown in \cite[Proposition 2.3]{cdll2019} that $D_m u$ can be understood as a derivative of $u$ along push-forward directions, meaning that for all Borel measurable bounded vector field $\phi:\R^K\mapsto \R^K$
we have
$$\lim_{h\to 0}\frac{u((Id+h \phi)_\sharp m)-u(m)}{h}=\int D_mu(m,x)\phi(x) \, m(dx).$$
Due to the expression of  $A^{a,m}_{i,\sqrt T}$ and $A^{a,m}_{-i,\sqrt T}$, we need to take derivatives in the directions $\left(Id+\frac{e_{j}}{T}\right)$ which are constant vector fields. However, the presence of terms $\frac{a(j)}{\hat a(i)}\sharp m$ in \eqref{eq:defA} is a randomization among the directions of the vector fields. The following Proposition shows that at the leading order, we can simplify these perturbations by averaging over these different vector fields. We recall the notational convention that for all $m'\in \Pc_2(\R^K)$
\begin{align*}
D_m u(m,[m'])&=\int D_m u(m,x) \, m'(dx) \in \R^K.
\end{align*}

\begin{proposition}\label{p:firstorder}
Suppose $u \in \mathcal{C}^{1}(\Pc(\R^K); \R)$. Then for all $a \in \Pc(\{0,1\}^K)$ and $i\in [K]$, we have that
\begin{align*}
 \lim_{T\to \infty} \sqrt T\left(u(A^{a,m}_{i,\sqrt T})-u(m)\right) &=-\Vc_{a,i}^\top D_mu\left(m,[m]\right) \\
 \lim_{T\to \infty}\sqrt T\left(u(A^{a,m}_{-i,\sqrt T})-u(m)\right) & =\Vc_{a,-i}^\top D_mu\left(m,[m]\right),
\end{align*}
where 
\begin{align*}
\Vc_{a,i}: =\sum_{j:i\in j}\frac{a(j)}{\hat a(i)} e_{j^c}\in \R^K, \quad \Vc_{a,-i}: =\sum_{j:i\notin j}\frac{a(j)}{\hat a(-i)} e_{j}\in \R^K.
\end{align*}
\end{proposition}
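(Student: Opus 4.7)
The plan is to first rewrite $A^{a,m}_{\pm i,\sqrt T}$ as an explicit convex mixture of small translates of $m$, and then carry out a two-step Taylor expansion: a first-order Fr\'echet expansion of $u$ in the measure, followed by a first-order Taylor expansion of $\frac{\delta u}{\delta m}$ in its space variable.

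Unpacking \eqref{eq:defA}, the composition of rescalings and translations sends the law of $X\sim m$ to the law of $X - e_{j^c}/\sqrt T$, so by linearity in $m$ I would first establish the clean identities
\begin{align*}
A^{a,m}_{i,\sqrt T} = \sum_{j:\,i\in j} \frac{a(j)}{\hat a(i)}\, m_{\sharp -e_{j^c}/\sqrt T}, \qquad A^{a,m}_{-i,\sqrt T} = \sum_{j:\,i\notin j} \frac{a(j)}{\hat a(-i)}\, m_{\sharp e_j/\sqrt T}.
\end{align*}
Writing $\mu_T := A^{a,m}_{i,\sqrt T}$ and $m_s := (1-s)m + s\mu_T$ for $s\in[0,1]$, Fr\'echet differentiability of $u$ together with continuity of $\frac{\delta u}{\delta m}$ would then justify the fundamental theorem of calculus along $s\mapsto u(m_s)$, yielding
\begin{align*}
u(\mu_T) - u(m) = \int_0^1 \int \frac{\delta u}{\delta m}(m_s, x)\,(\mu_T - m)(dx)\, ds.
\end{align*}

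Substituting the mixture expression for $\mu_T - m$ and changing variables, each inner integral becomes a difference $\frac{\delta u}{\delta m}(m_s, x - e_{j^c}/\sqrt T) - \frac{\delta u}{\delta m}(m_s, x)$, which a second FTC in $x$ (using $D_m u = D_x\frac{\delta u}{\delta m}$) rewrites as $-\frac{1}{\sqrt T}\int_0^1 D_m u(m_s, x - \tau e_{j^c}/\sqrt T)\cdot e_{j^c}\, d\tau$. Assembling everything produces
\begin{align*}
\sqrt T\, \bigl(u(\mu_T)-u(m)\bigr) = -\sum_{j:\,i\in j}\frac{a(j)}{\hat a(i)}\int_0^1 \int \int_0^1 D_m u\bigl(m_s, x - \tau e_{j^c}/\sqrt T\bigr)\cdot e_{j^c}\, d\tau\, m(dx)\, ds.
\end{align*}
As $T\to\infty$, $m_s\to m$ in $\Pc_2(\R^K)$ uniformly in $s$, so the integrand converges pointwise to $D_m u(m,x)\cdot e_{j^c}$; the quadratic growth of $D_m u$ combined with $m\in\Pc_2(\R^K)$ supplies a dominating function, and dominated convergence delivers the limit $-\Vc_{a,i}^\top D_m u(m,[m])$. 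The case of $A^{a,m}_{-i,\sqrt T}$ is identical modulo the sign flip arising from the push-forward direction $+e_j/\sqrt T$.

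The main technical delicacy is justifying the FTC in $s$ along the measure-valued path $m_s$; this follows from continuity of $\frac{\delta u}{\delta m}$ in $(m,x)$ combined with the Fr\'echet derivative identity applied infinitesimally along the interpolation, but it is the one step where a blackbox call to ``differentiability'' does not suffice. All remaining computations are routine given the quadratic growth assumption on $D_m u$.
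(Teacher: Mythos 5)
Your proof is correct and follows essentially the same route as the paper's: Fr\'echet FTC along the interpolation $m_s=(1-s)m+s A^{a,m}_{i,\sqrt T}$, substitution of the explicit mixture form of $A^{a,m}_{i,\sqrt T}-m$ as small translates of $m$, and passage to the limit. You merely spell out the final step more carefully (second FTC in $x$ to convert the $\frac{\delta u}{\delta m}$-differences into an integral of $D_m u$, then dominated convergence using the quadratic-growth hypothesis), whereas the paper leaves that limit computation to the reader.
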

\begin{rmk}
Note that $-\Vc_{a,i}\in \R^K$ (resp. $\Vc_{a,-i}\in \R^K$ ) represents the increase in the expectation of $X_t$ given the information that $Y=i$ (resp. $Y=-i$) and the adversary's strategy $a$.
\end{rmk}

\begin{proof}
Let us only compute the derivative in the direction of $A^{a,m}_{i,\sqrt T} -m$. By the definition of $\frac{\d u}{\d m}$, denoting $\widetilde A_{s,\sqrt T,m}=m+s (A^{a,m}_{i,\sqrt T}-m)$ we have that
\begin{align*}
&\sqrt T(u(A^{a,m}_{i,\sqrt T})-u(m))\\
&=\sqrt T\int_0^1 \int \frac{\d u}{\d m}\left(\widetilde A_{s,\sqrt T,m},x\right) \, (A^{a,m}_{i,\sqrt T}-m)(dx) \, ds\\
&=\sum_{j:i\in j}\frac{a(j)}{\hat a(i)} \sqrt T\int_0^1 \int \frac{\d u}{\d m}\left(\widetilde A_{s,\sqrt T,m},x-\frac{e_{j^c}}{\sqrt T}\right)-\frac{\d u}{\d m}\left(\widetilde A_{s,\sqrt T,m},x\right) \, m(dx) \,ds,
\end{align*}
and thus
\begin{align*}
\lim_{T\to \infty}\sqrt T\left(u(A^{a,m}_{i, \sqrt T})-u(m)\right)=-\sum_{j:i\in j}\frac{a(j)}{\hat a(i)} \int e_{j^c}^\top D_mu\left(m,x\right) \, m(dx).
\end{align*}
\end{proof}

We can now give the second order expansion along $T \mapsto u(A^{a,m}_{y, \sqrt T})$ for all $ y= \pm i$. 

\begin{proposition}\label{p:secondorder}
Suppose $u \in C^{2}(\Pc(\R^K); \R)$. Then we have that 
\begin{align}
&\lim_{T\to \infty}T\left(u(A^{a,m}_{i,\sqrt T})-u(m)+\frac{1}{\sqrt T}\Vc_{a,i}^\top D_mu\left(m,[m]\right)\right)\notag\\
&=\frac{1}{2}\sum_{j:i\in j}\frac{a(j)}{\hat a(i)}  e_{j^c}^\top D_{x}D_mu\left(m,[m]\right)e_{j^c}\label{eq:exp1}\\
& \, \, \, +{\frac{1}{2}}  \sum_{k,j:i\in k,i\in j}\frac{a(j)}{\hat a(i)}\frac{a(k)}{\hat a(i)} {e_{j^c}^\top} D^2_{mm}u\left(m,[m],[m]\right)e_{k^c},\notag
\end{align}
and 
\begin{align}
&\lim_{T\to \infty}T\left(u(A^{a,m}_{-i,\sqrt T})-u(m)-\frac{1}{\sqrt T}\Vc_{a,-i}^\top D_mu\left(m,[m]\right)\right)\notag\\
&=\frac{1}{2}\sum_{j:i\notin j}\frac{a(j)}{\hat a(-i)}  e_{j}^\top D_{x}D_mu\left(m,[m]\right)e_{j}\label{eq:exp2}\\
&+{\frac{1}{2}} \sum_{k,j:i\notin k,i\notin j}\frac{a(j)}{\hat a(-i)}\frac{a(k)}{\hat a(-i)} {e_{j}^\top} D^2_{mm}u\left(m,[m],[m]\right)e_{k}\notag
\end{align}
\end{proposition}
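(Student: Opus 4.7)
The plan is to continue the expansion from the proof of Proposition~\ref{p:firstorder} one order further, exploiting the $\mathcal{C}^2$ regularity of $u$. I treat \eqref{eq:exp1} in detail; \eqref{eq:exp2} follows by the symmetric replacements $-e_{j^c}\mapsto e_j$ and $\hat a(i)\mapsto \hat a(-i)$. Set $p_j := a(j)/\hat a(i)$ and $\mu_j := m_{\sharp -e_{j^c}/\sqrt T}$ so that $A^{a,m}_{i,\sqrt T} - m = \sum_{j:i\in j} p_j(\mu_j - m)$. Starting from the identity derived in the proof of Proposition~\ref{p:firstorder}, namely
\begin{align*}
u(A^{a,m}_{i,\sqrt T}) - u(m) = \sum_{j:i\in j} p_j \int_0^1 \int \left[\frac{\delta u}{\delta m}\bigl(\widetilde A_s,\, x - \tfrac{e_{j^c}}{\sqrt T}\bigr) - \frac{\delta u}{\delta m}\bigl(\widetilde A_s,\, x\bigr)\right] m(dx)\, ds
\end{align*}
with $\widetilde A_s := m + s(A^{a,m}_{i,\sqrt T} - m)$, I perform two further second-order expansions: one in the spatial variable $x$ and one in the measure argument $\widetilde A_s$.

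For the $x$-expansion, a second-order Taylor expansion of $\frac{\delta u}{\delta m}(\widetilde A_s,\cdot)$ in $x$ gives
\begin{align*}
\frac{\delta u}{\delta m}\bigl(\widetilde A_s, x - \tfrac{e_{j^c}}{\sqrt T}\bigr) - \frac{\delta u}{\delta m}(\widetilde A_s, x) = -\tfrac{1}{\sqrt T}\, e_{j^c}^\top D_m u(\widetilde A_s, x) + \tfrac{1}{2T}\, e_{j^c}^\top D_x D_m u(\widetilde A_s, x) e_{j^c} + o(T^{-1}).
\end{align*}
Integrating against $m(dx)$ and using that $\widetilde A_s \to m$ in $W_2$ uniformly in $s$, the $\frac{1}{2T}$ piece contributes, after multiplication by $p_j$ and the outer sum, exactly $\tfrac{1}{2T}\sum_{j:i\in j} p_j\, e_{j^c}^\top D_xD_m u(m,[m]) e_{j^c}$, which is the first term of \eqref{eq:exp1}.

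For the measure expansion, the leading $-\frac{1}{\sqrt T}$ piece reproduces $-\frac{1}{\sqrt T}\Vc_{a,i}^\top D_m u(m,[m])$ at first order but carries a correction of order $T^{-1}$. Applying the Fr\'echet identity
\begin{align*}
D_m u(\widetilde A_s, x) - D_m u(m, x) = \int_0^1 \int D_x \frac{\delta^2 u}{\delta m^2}(m_{s,r}, x, y)\,(\widetilde A_s - m)(dy)\, dr,
\end{align*}
with $m_{s,r} := m + r(\widetilde A_s - m)$, and substituting $\widetilde A_s - m = s\sum_{k:i\in k} p_k(\mu_k - m)$, a first-order $y$-Taylor expansion at each shift combined with $D^2_{mm} u = D_y D_x \tfrac{\delta^2 u}{\delta m^2}$ converts the inner integral into $-\tfrac{s}{\sqrt T}\sum_{k:i\in k} p_k D^2_{mm}u(m_{s,r},x,[m]) e_{k^c} + o(T^{-1/2})$. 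Multiplying by $-\tfrac{1}{\sqrt T}$, averaging against $m(dx)$ after contracting with $e_{j^c}$, integrating $\int_0^1 s\,ds = \tfrac12$, summing over $j$ with weight $p_j$, and using continuity of $D^2_{mm} u$ to replace $m_{s,r}$ by $m$, produces exactly the $D^2_{mm} u$ contribution to \eqref{eq:exp1}.

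The main technical obstacle is uniform control of all remainders at the $o(T^{-1})$ level. Two places are delicate: the second-order $x$-Taylor remainder of $\frac{\delta u}{\delta m}(\widetilde A_s,\cdot)$, where the continuity and quadratic growth of $D_xD_mu$ together with $m\in\Pc_2(\R^K)$ provide an integrable dominating function; and the replacement of $m_{s,r}$ by $m$ in $D^2_{mm} u$, where the bound $W_2(m_{s,r},m) = O(T^{-1/2})$ uniform in $(s,r)$, combined with continuity of $D^2_{mm} u$ on $W_2$-compact sets, suffices. Once these are handled, the combinatorial bookkeeping producing the single sum over $j$ and the double sum over $(j,k)$ is routine; multiplying the resulting identity by $T$ and letting $T\to\infty$ yields \eqref{eq:exp1}, and \eqref{eq:exp2} is obtained identically with $\mu_j$ replaced by $m_{\sharp e_j/\sqrt T}$.
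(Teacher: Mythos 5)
Your proposal is correct and follows essentially the same route as the paper: a nested expansion starting from the Fréchet identity along $A^{a,m}_{i,\sqrt T}-m$, a second-order Taylor expansion of $\frac{\delta u}{\delta m}(\widetilde A_s,\cdot)$ in $x$, and then a further Fréchet/Taylor expansion in the measure argument via $\frac{\delta^2 u}{\delta m^2}$ with a first-order $y$-shift to pick up $D^2_{mm}u$, followed by the $\int_0^1 s\,ds=\tfrac12$ integration. The only cosmetic difference is that the paper records the $x$-remainder in Lagrange form (intermediate point $\widetilde x_T$) while you use $o(T^{-1})$ notation, and you spell out the dominated-convergence argument slightly more explicitly.
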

\begin{rmk}
The Propositions \ref{p:firstorder} and \ref{p:secondorder} show that, at the leading orders, the impact of the scaled update $A^{a,m}_{y,\sqrt T}$ of $m$ on a smooth function $u$ can be characterized by multiplication of $D_m u$, $D_xD_mu$, and $D_{mm}^2 u$ with some matrices depending only on $a$.
\end{rmk}
\begin{proof}
Using the \cite[Equality (25)]{cdll2019}, we have
\begin{align*}
&T \left(u(A^{a,m}_{i,\sqrt T})-u(m)+\frac{1}{\sqrt T}\sum_{j:i\in j}\frac{a(j)}{\hat a(i)} \int e_{j^c}^\top D_mu\left(m,x\right)dm(x)\right)\\
&=\sum_{j:i\in j}\frac{a(j)}{\hat a(i)}T\int_0^1 \int \frac{\d u}{\d m}\left(\widetilde A_{s,\sqrt T,m},x-\frac{e_{j^c}}{\sqrt T}\right)-\frac{\d u}{\d m}\left(\widetilde A_{s,\sqrt T,m},x\right) \\
&\, \, \, \, \, \, \, +\frac{e_{j^c}^\top}{\sqrt T} D_x\frac{\d u}{\d m}u\left(m,x\right) dm(x)  ds.
\end{align*}
Let us compute the limit of integrand on the right hand side.  By Taylor expansion on $x$, it can ben seen that 
\begin{align*}
 & T\left(\frac{\d u}{\d m}\left(\widetilde A_{s,\sqrt T,m},x-\frac{e_{j^c}}{\sqrt T}\right)-\frac{\d u}{\d m}\left(\widetilde A_{s,\sqrt T,m},x\right)+\frac{e_{j^c}^\top}{\sqrt T} D_x\frac{\d u}{\d m}u\left(m,x\right)\right)\\
 &=\frac{1}{2}e_{j^c}^\top D^2_{x}\frac{\d u}{\d m}\left(\widetilde A_{s,\sqrt T,m},\widetilde x_{T}\right)e_{j^c}-\sqrt T{e_{j^c}^\top} \left(D_x \frac{\d u}{\d m}\left(\widetilde A_{s,T,m},x\right)-D_x\frac{\d u}{\d m}u\left(m,x\right)\right)
 \end{align*}
 where $\widetilde x_T$ is some point on the line segment joining $x$ and $x-\frac{e_{j^c}}{\sqrt T}$. Denoting $\widetilde A_{r,s,\sqrt T,m}=m+r (\widetilde A_{s,\sqrt T,m}-m)$, the right hand side of the above equation equals to
 \begin{align*}
 & \frac{1}{2}e_{j^c}^\top D^2_{x}\frac{\d u}{\d m}\left(\widetilde A_{s,\sqrt T,m},\widetilde x_{T}\right)e_{j^c}- s\sum_{k:i\in k}\frac{a(k)}{\hat a(i)} \int_0^1 \int  \\
 &\sqrt T  \bigg({e_{j^c}^\top}D_x\frac{\d^2 u}{\d m^2}\left(\widetilde A_{r,s,\sqrt T,m},x,x'-\frac{e_{k^c}}{\sqrt T}\right) 
 -{e_{j^c}^\top}D_x\frac{\d^2 u}{\d m^2}\left(\widetilde A_{r,s,\sqrt T,m},x,x'\right)\bigg) \, dm(x') dx.
\end{align*}
Letting $T \to \infty$, it converges to
\begin{align*}
 \frac{1}{2}e_{j^c}^\top D^2_{x}\frac{\d u}{\d m}\left(m,x\right)e_{j^c}+s \sum_{k:i\in k}\frac{a(k)}{\hat a(i)} {e_{j^c}^\top}\int D^2_{x,x'}\frac{\d^2 u}{\d m^2}u\left(m,x,x'\right)e_{k^c} \, dm(x'),
\end{align*}
and hence we obtain \eqref{eq:exp1} by integrating over $x$. Similar computation yields to \eqref{eq:exp2}.
\end{proof}
We now use \eqref{eq:dppT} to obtain a formal asymptotics for $u^T$ as $T\to\infty$. Assuming $u^T$ converges to a $\mathcal{C}^2$ function $u: [0,1] \times \Pc(\R^K) \to \R$, the dynamic programming principle yields to  
\begin{align*}
0&= \inf_{b \in \Pc([K])} \sup_{a\in \Pc(\{0,1\}^K)}\sum_i b(i) \hat a(i)T\left( u^T\left(s,A^{a,m}_{i,\sqrt{T}}\right)-  u^T\left( s-\frac{1}{T},m\right)\right)\\
&\quad+b(i)\hat a(-i)T\left(  u^T\left( s,A^{a,m}_{-i,\sqrt{T}} \right)-  u^T\left( s-\frac{1}{T},m\right)\right).
\end{align*}
Using Proposition~\ref{p:firstorder} and \ref{p:secondorder} for large enough $T$, we obtain that 
\begin{align}
&\mathcal{O}(1)=\pa_t  u(t,m)+\inf_{b \in \Pc([K])} \sup_{a\in \Pc(\{0,1\}^K)} \sqrt{T}\sum_i b(i) \left(\hat a(-i)\Vc_{a,-i}-\hat a(i)\Vc_{a,i} \right)^\top D_m u\left(t,m,[m]\right) \notag 
\\
&+\frac{1}{2}{b(i)}{\hat a(i)}\left(\Vc_{a,i}^\top D^2_{mm} u\left(t,m,[m],[m]\right)\Vc_{a,i}+\sum_{j:i\in j}\frac{a(j)}{\hat a(i)}e_{j^c}^\top D_{x}D_m u\left(t,m,[m]\right)e_{j^c}\right)\notag\\
&+\frac{1}{2}{b(i)}{\hat a(-i)} \left(\Vc_{a,-i}^\top D^2_{mm} u\left(t,m,[m],[m]\right)\Vc_{a,-i}+\sum_{j:i\notin j}\frac{a(j)}{\hat a(-i)}  e_{j}^\top D_{x}D_m u\left(t,m,[m]\right)e_{j}\right).\label{eq:wpdeup} 
\end{align}

Notice that $A^{a,m_{ \sharp \epsilon \1}}_{y, \sqrt T}= \left(A^{a,m}_{y, \sqrt T}\right)_{\sharp \epsilon \1}$ for any $ y \in \{ \pm i\}$, and the final condition satisfies $u^T(1,m_{\sharp \epsilon \1})=u^T(1,m)+\epsilon$. Therefore by backward induction, we have $u^T(t,m_{\sharp \epsilon \1})=u^T(t,m)+\epsilon$ for any $t \in [0,1]$, and also in its limit as $T \to \infty$ 
\begin{align*}
u(t,m_{\sharp \epsilon\1})=u(t,m)+\epsilon.
\end{align*}
Thus, thanks to \cite[Proposition 2.3]{cdll2019}, we have that 
\begin{align*}
\1^\top D_mu(t,m,[m])= 1.
\end{align*}
Additionally, each component of $D_mu(t,m,[m])$ is clearly non-negative, which implies that $D_mu(t,m,[m])\in \R^K$ is simplex valued. 
Denoting $u_i(t,m)$ the $i$th component of $ D_mu\left(t,m,[m]\right)$, we have that 
\begin{align}\label{eq:minimax}
&\sum_i b(i)  \left(\sum_{j:i\notin j}{a(j)} e_{j}^\top-\sum_{j:i\in j}{a(j)}e_{j^c}^\top \right) D_mu\left(t,m,[m]\right) \notag \\
&=\sum_i b(i)  \left(\sum_{j}{a(j)} e_{j}^\top-\sum_{j:i\in j}{a(j)}\1^\top  \right) D_mu\left(t,m,[m]\right) \notag \\
&= \sum_{j}{a(j)} \sum_{i \in j}u_i(t,m)-\sum_i b(i) \sum_{j:i\in j}{a(j)} = \sum_{i}(u_i(t,m)-b(i))\sum_{j:i \in j}{a(j)}.
\end{align}
Thus, in order to have the equality \eqref{eq:wpdeup}, the coefficients of the $\sqrt T$ term must be zero, i.e., 
\begin{align*}
0&=\inf_{b \in \Pc([K])} \sup_{a\in \Pc(\{0,1\}^K)} \sum_i b(i) \left(\hat a(-i)\Vc_{a,-i}-\hat a(i)\Vc_{a,i} \right)^\top D_m u\left(t,m,[m]\right) \\
&=\inf_{b \in \Pc([K])} \sup_{a\in \Pc(\{0,1\}^K)} \sum_{i}(u_i(t,m)-b(i))\sum_{j:i \in j}{a(j)}. 
\end{align*}
Otherwise, the first order term explodes. Therefore the forecaster is forced to choose the strategy $b= D_mu\left(t,m,[m]\right)$, and we obtain the PDE 
\begin{align}\label{eq:originalPDE}
&0=\pa_t  u(t,m) +\sup_{a\in \Pc(\{0,1\}^K)}  \sum_i \\
&+\frac{1}{2}{u_i(t,m)}{\hat a(i)}\left(\Vc_{a,i}^\top D^2_{mm} u\left(t,m,[m],[m]\right)\Vc_{a,i}+\sum_{j:i\in j}\frac{a(j)}{\hat a(i)}e_{j^c}^\top D_{x}D_m u\left(t,m,[m]\right)e_{j^c}\right)\notag\\
&+\frac{1}{2}{u_i(t,m)}{\hat a(-i)} \left(\Vc_{a,-i}^\top D^2_{mm} u\left(t,m,[m],[m]\right)\Vc_{a,-i}+\sum_{j:i\notin j}\frac{a(j)}{\hat a(-i)}  e_{j}^\top D_{x}D_m u\left(t,m,[m]\right)e_{j}\right). \notag
\end{align}

\begin{rmk}\label{rmk:balanced}
(i) We say $a \in \Pc(\{0,1\}^K)$ is a balanced strategy if $ \sum\limits_{j: i \in j} a(j)$ is independent of $i$, and denote by $\mathcal{E}$ the set of all balanced strategies. According to \eqref{eq:minimax}, if we restrict $a$ in \eqref{eq:wpdeup} to be balanced, the first order term vanishes for any $b \in \Pc([K])$. 

(ii)The standard tool to show the convergence of $u^T$ to the solution of \eqref{eq:originalPDE} is to use the stability and comparison of viscosity solutions, see for example \cite{https://doi.org/10.1002/cpa.22071,barles1991convergence} in the finite dimensional cases.  However, a comparison result for viscosity solution of second order PDEs on the Wasserstein space is not available in the literature in the generality we need, see for example \cite{burzoni2020viscosity,MR3907014,cosso2021master} and the references therein. 

(iii) Because the second derivative terms $D^2_{mm} u$ and $D_{x}D_m u$ are expected to explode as $t\to1$, the generator of  \eqref{eq:originalPDE} is expected to become discontinuous as $t\to 1$. Thus, it is more convenient to use the equation
\begin{align}\label{eq:originalPDEu}
&0=\pa_t  u(t,m) +\sup_{i,a\in \Pc(\{0,1\}^K)}   \\
&+\frac{1}{2}{\hat a(i)}\left(\Vc_{a,i}^\top D^2_{mm} u\left(t,m,[m],[m]\right)\Vc_{a,i}+\sum_{j:i\in j}\frac{a(j)}{\hat a(i)}e_{j^c}^\top D_{x}D_m u\left(t,m,[m]\right)e_{j^c}\right)\notag\\
&+\frac{1}{2}{\hat a(-i)} \left(\Vc_{a,-i}^\top D^2_{mm} u\left(t,m,[m],[m]\right)\Vc_{a,-i}+\sum_{j:i\notin j}\frac{a(j)}{\hat a(-i)}  e_{j}^\top D_{x}D_m u\left(t,m,[m]\right)e_{j}\right)\notag
\end{align}
to obtain regret bounds. Indeed, any supersolution of \eqref{eq:originalPDEu} is clearly a supersolution of \eqref{eq:originalPDE} and the generator of \eqref{eq:originalPDEu} is Lipschitz continuous on the derivatives of $u$. Thus, one can expect a simpler proof of comparison of viscosity solutions. 

\end{rmk}

\section{Upper bound by smooth supersolution of the PDE}
In this part, we design robust strategies of the forecaster using smooth supersolutions of \eqref{eq:originalPDE}. Note that \eqref{eq:originalPDE} becomes simpler if $D^2_{mm} u=0$. This is the case if $u$ is linear in $m$. The following Lemma uses this idea to generate simple supersolutions to \eqref{eq:originalPDE}.
\begin{lemma}\label{lem:simplesuper}
Let $ \phi$ be a classical solution of
\begin{equation}\label{eq:simplepde}
\begin{split}
&0 \geq \pa_t   \phi(t,x)+\frac{1}{2}\sup_{i, a\in \Pc(\{0,1\}^K)} Tr\left(D^2_{xx}\phi\left(t,x\right)\left(\sum_{j}{a(j)}\left( \1_{i\in j}e_{j^c}e_{j^c}^\top+\1_{i\notin j}e_{j}e_{j}^\top \right)\right)\right)  \\
 &\phi(1,x) \geq \max_i  x^i, \, \, \, \phi(t,x + \lambda \1)=\phi(t,x)+\lambda.
 \end{split}
\end{equation}
 Then, the function $\Phi:[0,1]\times \Pc_2(\R^K)\mapsto \R$ defined by 
$$ \Phi(t,m)=\phi(t,[m]):=\int \phi(t,x) \, m(dx)$$
is a smooth supersolution to \eqref{eq:originalPDE}
with 
\begin{equation}\label{eq:derreduced}
D_{m} \Phi\left(t,m,x\right)=D_x   \phi(t,x),\, \, D_xD_{m} \Phi\left(t,m,x\right)=D^2_{xx}   \phi(t,x),\, \,   D^2_{mm} \Phi\left(t,m,x,y\right)=0.
\end{equation}
\end{lemma}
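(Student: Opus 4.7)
The plan is to compute the Wasserstein derivatives of $\Phi$ directly from its affine structure, substitute them into \eqref{eq:originalPDE}, and reduce the resulting inequality to the pointwise PDE \eqref{eq:simplepde} integrated against $m$. The key simplification is that affinity in $m$ kills the $D^2_{mm}$ contribution, leaving only the term involving $D_xD_m \Phi = D^2_{xx}\phi$.

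First I would verify \eqref{eq:derreduced}. The flat derivative of $\Phi(t,m) = \int \phi(t,x)\, m(dx)$ is $\tfrac{\d \Phi}{\d m}(t,m,x) = \phi(t,x)$, which is independent of $m$. Differentiating in $x$ gives $D_m\Phi(t,m,x) = D_x\phi(t,x)$ and $D_xD_m\Phi(t,m,x) = D^2_{xx}\phi(t,x)$, and since $D_m\Phi$ does not depend on $m$, $D^2_{mm}\Phi \equiv 0$. The translation-invariance $\phi(t,x+\lambda\1) = \phi(t,x)+\lambda$ additionally yields $\1^\top D_x\phi \equiv 1$, hence $\1^\top D_m\Phi(t,m,[m]) = 1$, so $D_m\Phi(t,m,[m])$ lies on the affine simplex that Section 3 requires to evaluate \eqref{eq:originalPDE}.

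Second I would plug these derivatives into \eqref{eq:originalPDE}. The $D^2_{mm}\Phi$ terms vanish, and the $\hat a(\pm i)$ prefactors cancel with the $1/\hat a(\pm i)$ denominators in the inner sums. Writing $u_i^\Phi(t,m) := \int \pa_i\phi(t,x)\, m(dx)$ and $M_i(a) := \sum_j a(j)\bigl(\1_{i\in j} e_{j^c}e_{j^c}^\top + \1_{i\notin j} e_j e_j^\top\bigr)$, the supersolution inequality \eqref{eq:originalPDE} at $\Phi$ becomes
\begin{align*}
0 \;\geq\; \pa_t \Phi(t,m) + \sup_{a \in \Pc(\{0,1\}^K)} \sum_i \tfrac{1}{2}\, u_i^\Phi(t,m) \int \mathrm{Tr}\bigl(D^2_{xx}\phi(t,x)\, M_i(a)\bigr)\, m(dx).
\end{align*}
Using that $(u_i^\Phi)_i$ is a probability vector, the weighted sum is dominated by its maximum: $\sum_i u_i^\Phi H_i(a) \leq \sup_i H_i(a)$. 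Swapping $\sup$ with the $x$-integration via the elementary bound $\sup_{a,i} \int (\cdot)\, m(dx) \leq \int \sup_{a,i}(\cdot)\, m(dx)$ and absorbing $\pa_t\Phi = \int \pa_t\phi\, m(dx)$ into the integrand, the right-hand side is in turn bounded by
\begin{align*}
\int \Bigl[\pa_t\phi(t,x) + \tfrac{1}{2}\sup_{i,a} \mathrm{Tr}\bigl(D^2_{xx}\phi(t,x)\, M_i(a)\bigr)\Bigr]\, m(dx) \;\leq\; 0,
\end{align*}
the final inequality being the pointwise hypothesis \eqref{eq:simplepde}.

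The one delicate point is the convex-combination inequality $\sum_i u_i^\Phi H_i \leq \sup_i H_i$, which requires $u_i^\Phi \geq 0$. Coordinatewise monotonicity of $\phi$ is not listed among the hypotheses of \eqref{eq:simplepde}, but it is natural and satisfied by the softmax-type smoothings of $\max_i x^i$ used to construct regret bounds. A cleaner alternative is to observe that the same chain of inequalities actually shows $\Phi$ is a supersolution of the stronger equation \eqref{eq:originalPDEu} from Remark \ref{rmk:balanced}(iii), which dispenses with the $u_i$ weights entirely; the supersolution property for \eqref{eq:originalPDE} is then automatic on the simplex-valued subset of Wasserstein gradients.
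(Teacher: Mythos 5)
Your proof is essentially the same as the paper's, differing only in direction of presentation: the paper starts from the pointwise inequality \eqref{eq:simplepde}, integrates against $m$, swaps $\sup$ and integral, rewrites $D^2_{xx}\phi$ as $D_xD_m\Phi$, and finally passes from $\sup_{i,a}$ to $\sup_a \sum_i \Phi_i(t,m)(\cdot)$ by the convex-combination bound; you run the same chain in reverse, reducing the target inequality for $\Phi$ to the hypothesis on $\phi$. The derivative identities \eqref{eq:derreduced}, the cancellation of the $\hat a(\pm i)$ prefactors against the $1/\hat a(\pm i)$ denominators once $D^2_{mm}\Phi = 0$, and the $\sup$--integral swap all match.

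You are right that the convex-combination step (both in your version and the paper's last inequality) implicitly uses that $D_m\Phi(t,m,[m])$ is simplex-valued, i.e.\ that $\pa_{x^i}\phi \geq 0$ in addition to $\1^\top D_x\phi = 1$ coming from translation equivariance. The paper does not state this as a hypothesis in Lemma~\ref{lem:simplesuper}, but it silently relies on it there (and again in Proposition~\ref{prop:upperbound}, where $\b_n^* = D_m\Phi(t_n,\widetilde m,[\widetilde m])$ must be an element of $\Pc([K])$ for the strategy to even be admissible). Your suggested cleaner framing via \eqref{eq:originalPDEu} does not actually remove the issue, since the implication ``supersolution of \eqref{eq:originalPDEu} $\Rightarrow$ supersolution of \eqref{eq:originalPDE}'' in Remark~\ref{rmk:balanced}(iii) invokes the same $\sup_i \geq \sum_i u_i(\cdot)$ bound. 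The honest fix is simply to add coordinatewise monotonicity of $\phi$ to the hypotheses of the lemma; it holds for the heat-equation solution in Example~\ref{ex:1} (and for any $\phi$ obtained from a monotone terminal datum by a monotonicity-preserving evolution), so nothing in the applications is lost.
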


\begin{proof}
Using \eqref{eq:derreduced} which can be easily verified, together with the supersolution property of $\phi$ we have that 
\begin{align*}
0& \geq \pa_t  \phi(t,[m])+\frac{1}{2}\int \sup_{i, a\in \Pc(\{0,1\}^K)} Tr\left(D^2_{xx} \phi\left(t,x\right)\left(\sum_{j}{a(j)}\left( \1_{i\in j}e_{j^c}e_{j^c}^\top+\1_{i\notin j}e_{j}e_{j}^\top \right)\right)\right)dm(x)\\
&\geq \pa_t  \phi(t,[m])+\frac{1}{2} \sup_{i, a\in \Pc(\{0,1\}^K)} Tr\left(D^2_{xx}\phi\left(t,[m]\right)\left(\sum_{j}{a(j)}\left( \1_{i\in j}e_{j^c}e_{j^c}^\top+\1_{i\notin j}e_{j}e_{j}^\top \right)\right)\right)\\
&\geq \pa_t  \Phi(t,[m])+\frac{1}{2} \sup_{i, a\in \Pc(\{0,1\}^K)} Tr\left(D_{x}D_m \Phi\left(t,m,[m]\right)\left(\sum_{j}{a(j)}\left( \1_{i\in j}e_{j^c}e_{j^c}^\top+\1_{i\notin j}e_{j}e_{j}^\top \right)\right)\right) \\
& \geq \pa_t \Phi(t,[m])+ \frac{1}{2} \sup_{a \in \Pc(\{0,1\}^K)} \sum_i \\
& \quad \quad  \Phi_i(t, m) \, Tr\left(D_{x}D_m \Phi\left(t,m,[m]\right)\left(\sum_{j}{a(j)}\left( \1_{i\in j}e_{j^c}e_{j^c}^\top+\1_{i\notin j}e_{j}e_{j}^\top \right)\right)\right),
\end{align*}
where $\Phi_i(t,m)$ denotes the $i$-th coordinate of $D_m \Phi(t,m,[m])$. This proves the supersolution property we want. 
\end{proof}

\begin{rmk}
It can be easily verified that smooth supersolutions of 
\begin{align*}
& 0=\pa_t   \phi(t,x)+\frac{1}{2}\sup_{a\in \Pc(\{0,1\}^K)} \sum_i \\
& \quad \quad \pa_{x^i} \phi(t,x) \, Tr\left(D^2_{xx}\phi\left(t,x\right)\left(\sum_{j}{a(j)}\left( \1_{i\in j}e_{j^c}e_{j^c}^\top+\1_{i\notin j}e_{j}e_{j}^\top \right)\right)\right)
\end{align*}
cannot generate supersolutions of \eqref{eq:originalPDE} simply by integrating $x$ over $m$. 
\end{rmk}

We now show how we can use the Lemma \ref{eq:simplepde} to obtain regret bounds. 
Fix a large time horizon $T$. Denote $\widetilde{m}:=m^{* \frac{1}{\sqrt{T}}}$, $t_n=\frac{n}{T}$, where $n$ denotes the current step. For any smooth supersolotuion $\phi$ of \eqref{eq:simplepde}, we define a strategy of the forecaster
\begin{align*}
(\b_0^*,\dotso, \b_{T-1}^*)
\end{align*}
via
\begin{align}\label{eq:forecasterstrategy}
\b_n^*(m):=D_m \Phi \left(t_{n},\tilde m, \left[\widetilde{m} \right]\right). 
\end{align}
Suppose that the initial belief is $m_0$, and denote random belief as $(m_n)_{n=1,\dotso,T}$. Then it is clear that 
\begin{align*}
v_T(m_0) \leq \sup_{\alpha} \E^{\b^*, \alpha} \left[f([m_T]) \right],
\end{align*}
where $f$ is the terminal condition $f(x):= \max_i x^i$. The following Proposition provides assumptions for such a methodology to yield to regret bounds. 

\begin{proposition}\label{prop:upperbound}
Suppose $\phi$ is a classical solution of \eqref{eq:simplepde} and
\begin{align}\label{eq:upperderivative}
|\pa^2_{tt} \phi(t,x)| \leq \frac{C}{(1-t)^{3/2}}, \quad |\pa^3_{xxx} \phi(t,x)|+ |\pa^2_{tx} \phi(t,x)|  \leq \frac{C}{1-t}, \quad \forall x \in \R^K,
\end{align}
for some positive constant $C$. Then the strategy $\b^*$ of the forecaster defined in \eqref{eq:forecasterstrategy} yields regret bounded above by $\sqrt{T}  \phi(0, [ \widetilde m_0])$ asymptotically. 
\end{proposition}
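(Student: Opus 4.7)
The plan is a backward induction comparing, at each step $n$, the worst-case regret under $\b^*$,
\[V_n^*(m):=\sup_{\a}\E^{\b^*,\a}\bigl[f([m_T])\bigm|m_n=m\bigr],\]
with the candidate upper bound $W_n(m):=\sqrt T\,\Phi(t_n,\widetilde m)$, where $\Phi(t,m):=\phi(t,[m])=\int\phi(t,x)\,m(dx)$ and $\widetilde m:=m^{*1/\sqrt T}$. Since $v_T(0,m_0)\leq V_0^*(m_0)$ by the definition of $\b^*$, it suffices to establish $V_0^*(m_0)\leq W_0(m_0)+o(\sqrt T)$. The base case $n=T$ is immediate from the terminal inequality $\phi(1,x)\geq \max_i x^i$ and the change of variable $x=\sqrt T\,\widetilde x$, which give $W_T(m)\geq\int \max_i x^i\,m(dx)=V_T^*(m)$.

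For the inductive step, the DPP under $\b^*$ combined with the identity $l(m,a,\pm i)^{*1/\sqrt T}=A^{a,\widetilde m}_{\pm i,\sqrt T}$ from Proposition \ref{prop:updatebelief} reduces the one-step comparison to
\[\Phi(t_n,\widetilde m)\geq \sup_a\sum_i b_n^*(i)\bigl[\hat a(i)\Phi(t_{n+1},A^{a,\widetilde m}_{i,\sqrt T})+\hat a(-i)\Phi(t_{n+1},A^{a,\widetilde m}_{-i,\sqrt T})\bigr]-\eta_n.\]
Since $\Phi$ is linear in $m$, the push-forwards admit a direct Taylor expansion in $x$, bypassing the Wasserstein calculus: writing $D(t):=\int D_x\phi(t,\widetilde x)\widetilde m(d\widetilde x)$ and $M(t):=\int D^2_{xx}\phi(t,\widetilde x)\widetilde m(d\widetilde x)$,
\[\Phi(t_{n+1},A^{a,\widetilde m}_{i,\sqrt T})-\Phi(t_{n+1},\widetilde m)=-\tfrac{1}{\sqrt T}\Vc_{a,i}^\top D(t_{n+1})+\tfrac{1}{2T}\sum_{j:i\in j}\tfrac{a(j)}{\hat a(i)}e_{j^c}^\top M(t_{n+1})e_{j^c}+R,\]
with $|R|\leq C T^{-3/2}(1-t_n)^{-1}$ from the third-derivative bound, and symmetrically for $-i$. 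A separate time Taylor using $|\pa^2_{tt}\phi|\leq C(1-t)^{-3/2}$ supplies $\Phi(t_n,\widetilde m)-\Phi(t_{n+1},\widetilde m)=-\tfrac{1}{T}\pa_t\Phi(t_{n+1},\widetilde m)+O(T^{-2}(1-t_n)^{-3/2})$.

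Two cancellations drive the argument. First, the $T^{-1}$ contributions vanish via the supersolution property \eqref{eq:simplepde}: integrating the PDE against $\widetilde m$ and using $\sum_i b_n^*(i)Tr(M\,N_{a,i})\leq \int \sup_{i,a}Tr(D^2_{xx}\phi\,N_{a,i})\widetilde m(d\widetilde x)$ with $N_{a,i}:=\sum_j a(j)[\1_{i\in j}e_{j^c}e_{j^c}^\top+\1_{i\notin j}e_je_j^\top]$ to push $\sup$ outside the integral, one obtains $\pa_t\Phi(t_{n+1},\widetilde m)+\tfrac12\sup_a\sum_i b_n^*(i)Tr(M(t_{n+1})\,N_{a,i})\leq 0$. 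Second, the $T^{-1/2}$ first-order piece collapses via identity \eqref{eq:minimax} to $\tfrac{1}{\sqrt T}\sum_i(D_i(t_{n+1})-b_n^*(i))\sum_{j:i\in j}a(j)$; since $b_n^*=D(t_n)$, the bound $|\pa^2_{tx}\phi|\leq C(1-t)^{-1}$ controls the mismatch by $C T^{-3/2}(1-t_n)^{-1}$. Collecting these, $\eta_n=O(T^{-3/2}(1-t_n)^{-1})+O(T^{-2}(1-t_n)^{-3/2})$, whence $\sqrt T\,\eta_n=O(T^{-1}(1-t_n)^{-1})+O(T^{-3/2}(1-t_n)^{-3/2})$.

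Summing over those $n$ for which the expansions are valid, $\sum_n\sqrt T\,\eta_n=O(\log T)+O(1)=o(\sqrt T)$, closing the induction and giving $v_T(0,m_0)\leq V_0^*(m_0)\leq \sqrt T\,\phi(0,[\widetilde m_0])+o(\sqrt T)$. The principal obstacle is the singularity of the derivative bounds as $t\to 1$: at $n=T-1$ one has $(1-t_n)^{-1}=T$, so the Taylor remainder ceases to be summable, and this terminal step must be treated separately using the terminal inequality $\phi(1,x)\geq \max_i x^i$ directly in place of a Taylor expansion, with the discrepancy absorbed into the $O(\log T)$ error. The exponents in \eqref{eq:upperderivative} are tight precisely so that this balancing succeeds. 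A secondary point is checking that $b_n^*$ is simplex-valued: summation to one follows by differentiating the identity $\phi(t,x+\l\1)=\phi(t,x)+\l$, and componentwise non-negativity must be inherited from a monotonicity of $\phi$ in each coordinate $x^i$ propagated from its terminal data.
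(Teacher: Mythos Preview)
Your proposal follows the same overall strategy as the paper: a telescopic (equivalently, backward-inductive) comparison of the $\b^*$-regret against $\sqrt T\,\Phi(t_n,\widetilde m_n)$, with a Taylor expansion exploiting the linearity of $\Phi$ in $m$, cancellation of the $T^{-1/2}$ term via the choice of $\b^*$, and non-positivity of the $T^{-1}$ term via the supersolution property \eqref{eq:simplepde}. The substantive difference is your choice of base point. The paper performs the joint expansion of $\phi(t_{n+1},x-e_{j^c}/\sqrt T)-\phi(t_n,x)$ around $(t_n,x)$; consequently the first-order spatial term is $-T^{-1/2}e_{j^c}^\top\pa_x\phi(t_n,x)$, which after integration matches $\b^*_n=D_x\phi(t_n,[\widetilde m_n])$ \emph{exactly}, and the supersolution inequality is invoked at $t_n<1$. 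All three integral remainders in the paper's $O(T,n)$ remain finite at $n=T-1$ (the potentially singular denominators $(1-t_n-s)^{-3/2}$ and $(1-t_n-s)^{-1}$ are tempered by the factor $(1/T-s)$), so no special treatment of the terminal step is needed.

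By contrast, your spatial expansion is centered at $t_{n+1}$, so the first-order coefficient is $D(t_{n+1})$ while $\b^*_n=D(t_n)$; this forces you to control the time mismatch via $|\pa^2_{tx}\phi|$, and your bound $CT^{-3/2}(1-t_n)^{-1}$ is not quite right: the mean-value theorem gives a point in $(t_n,t_{n+1})$, so the correct denominator is $(1-t_{n+1})$, which is zero at $n=T-1$. Likewise your time expansion places $\pa_t\Phi$ at $t_{n+1}$, where the PDE need not hold. You correctly flag the terminal step as a problem, but the fix is actually simpler than separate treatment: expand at $t_n$ instead of $t_{n+1}$, as the paper does, and the obstacle disappears entirely. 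With that change your proof is essentially the paper's.
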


\begin{proof} 
Our goal is to show that $\lim\limits_{T \to \infty} \frac{1}{\sqrt{T}}\sup_{\alpha} \E^{\b^*, \alpha} \left[f([m_T]) \right]- {\phi}\left( 0,\left[ \widetilde{ m}_0 \right] \right)  \leq 0$.  First we rewrite the difference as a telescopic sum
\begin{align*}
& \frac{1}{\sqrt{T}}\sup_{\alpha} \E^{\b^*, \alpha} \left[f([m_T]) \right]- {\phi}\left( 0,\left[ \widetilde{ m}_0 \right] \right) = \sup_{\alpha} \E^{\b^*, \alpha} \left[f([ \widetilde{m}_T]) \right]- {\phi}\left( 0,\left[ \widetilde{ m}_0 \right] \right) \\
&= \sup_{\alpha} \sum_{n=0}^{T-1}  \left( \E^{\b^*,\alpha}\left[ {\phi}(t_{n+1}, [\widetilde{m}_{n+1} ]) \right] -  \E^{\b^*,\alpha}\left[{\phi}(t_{n}, [\widetilde{m}_{n} ]) \right] \right). 
\end{align*}
Conditioning on $\widetilde{m}_{n}=m$, we have that 
\begin{align}\label{eq:last1}
& \E^{\b^*,a}\left[ {\phi}(t_{n+1}, [\widetilde{m}_{n+1} ])- \phi(t_{n}, [\widetilde{m}_{n}])  \, | \, \widetilde{m}_{n}=m \right] \\
&= \sum_{i} \b^*_{n}(m_n)(i)  \left(\hat a(i) \phi \left(t_{n+1}, \left[A^{a, m}_{i,\sqrt{T}} \right]\right)+  \hat a(-i) \phi \left(t_{n+1}, \left[A^{a,  m}_{-i,\sqrt{T}} \right]\right) \right) - \phi(t_{n}, [m]). \notag
\end{align}
Using the linear structure of $\phi(t,[m])$, it can be seen that 
\begin{align}
&\phi \left(t_{n+1}, \left[A^{a,m}_{i,\sqrt{T}} \right]\right)-\phi(t_n, [m]) \notag \\
&= \sum_{j: i\in j} \frac{a(j)}{\hat a(i)} \int \left( \phi\left(t_{n+1}, x-\frac{e_{j^c}}{\sqrt T} \right)-\phi(t_n,x)  \right) m(dx). \label{eq:last2}
\end{align}
For any $i \in j \subset [K]$, we have the equality
\begin{align}\label{eq:f1}
&\phi\left(t_{n+1}, x-\frac{e_{j^c}}{\sqrt T} \right)-\phi(t_n,x)  \\ 
&= -\frac{e_{j^c}}{\sqrt T}  \pa_x \phi(t_n,x) \label{eq:f2}+\frac{1}{T} \left(\pa_t \phi(t_n,x)+\frac{1}{2} e_{j^c}^\top \pa^2_{xx} \phi(t_n,x) e_{j^c}  \right) \notag  \\
& \quad   + \frac{1}{T} \int_0^1 \pa_t \phi\left(t_n+\frac{s}{T},x-\frac{se_{j^c}}{\sqrt T}\right)-\pa_t \phi(t_n,x) \, ds \notag  \\
& \quad - \frac{e_{j^c}}{\sqrt T} \int_0^1   \pa_x \phi \left(t_n+\frac{s}{T},x- \frac{se_{j^c}}{\sqrt T} \right)-\pa_x \phi \left(t_n,x- \frac{se_{j^c}}{\sqrt T} \right) ds  \notag \\
& \quad +\frac{1}{T} \int_0^1 (1-s) e_{j^c}^\top \left(\pa_{xx}^2 \phi\left(t_n,x-\frac{se_{j^c}}{\sqrt T} \right)-\pa_{xx}^2 \phi(t_n,x) \right) e_{j^c} \, ds. \notag
\end{align}

Using our assumption \eqref{eq:upperderivative}, we can estimate the last three terms in the equation above
\begin{align*}
&\left| \frac{1}{T}\int_0^1 \pa_t\phi \left(t_n+\frac{s}{T},x- \frac{se_{j^c}}{\sqrt T} \right) - \pa_t\phi \left(t_n,x\right) ds \right| \leq  C \int_0^{1/T} \frac{\frac{1}{T}-s}{(1-t_n-s)^{3/2} } ds  \\
& \left| \frac{e_{j^c}}{\sqrt T} \int_0^1   \pa_x \phi \left(t_n+\frac{s}{T},x- \frac{se_{j^c}}{\sqrt T} \right)-\pa_x \phi \left(t_n,x- \frac{se_{j^c}}{\sqrt T} \right) ds \right|  \leq C \sqrt T \int_0^{1/T} \frac{\frac{1}{T}-s}{1-t_n-s} ds \\
& \left|\frac{1}{T} \int_0^1 (1-s) e_{j^c}^\top \left(\pa_{xx}^2 \phi\left(t_n,x-\frac{se_{j^c}}{\sqrt T} \right)-\pa_{xx}^2 \phi(t_n,x) \right) e_{j^c} \, ds  \right| \leq \frac{C}{T^{3/2}(1-t_n)}.
\end{align*}
Let us define 
\begin{equation}\label{eq:errorterm}
O(T,n):=C \left( \int_0^{1/T} \frac{\frac{1}{T}-s}{(1-t_n-s)^{3/2} } ds+ \sqrt T \int_0^{1/T} \frac{\frac{1}{T}-s}{1-t_n-s} ds +\frac{1}{T^{3/2}(1-t_n)} \right). 
\end{equation}
Now plugging \eqref{eq:last2} and \eqref{eq:f1} into \eqref{eq:last1}, we obtain that 
\begin{equation}\label{eq:ff}
\begin{split}
& \E^{\b^*,a}\left[ {\phi}(t_{n+1}, [\widetilde{m}_{n+1} ])- \phi(t_{n}, [\widetilde{m}_{n}])  \, | \, \widetilde{m}_{n}=m \right] \\
& \leq \frac{1}{\sqrt T}\sum_i \b^*_n( m_n)(i) \left( \sum_{j, i \not \in j} a(j) e_j^\top-\sum_{j:i \in j} a(j)e_{j^c}^\top   \right) \pa_x \phi(t_n,[m])+\frac{1}{T} \times  \\
& \left(\pa_t \phi(t_n, [m])+\frac{1}{2}\sum_{i} \b_n^*(m_n)(i) Tr\left(D^2_{xx}\phi\left(t_n,[m]\right)\left(\sum_{j}{a(j)}\left( \1_{i\in j}e_{j^c}e_{j^c}^\top+\1_{i\notin j}e_{j}e_{j}^\top \right)\right)\right) \right)  \\
&+O(T,n).
\end{split}
\end{equation}
The first term on the right hand side vanishes due to our choice of $\b^*$, the second term is non-positive due to the supersolution property of $\phi$, and thus we obtain that 
\begin{align*}
\E^{\b^*,a}\left[ {\phi}(t_{n+1}, [\widetilde{m}_{n+1} ])- \phi(t_{n}, [\widetilde{m}_{n}])  \, | \, \widetilde{m}_{n}=m \right] \leq O(T,n). 
\end{align*}
Summing up from $n=0$ to $T-1$, taking supremum over $\alpha \in \mathcal{A}$, and letting $T \to \infty$, we conclude that 
\begin{align*}
\lim\limits_{T \to \infty} \frac{1}{\sqrt{T}}\sup_{\alpha} \E^{\b^*, \alpha} \left[f([m_T]) \right]- {\phi}\left( 0,\left[ \widetilde{ m}_0 \right] \right)  \leq \lim\limits_{T \to \infty} \sum_{n=0}^{T-1} O(T,n)= 0.
\end{align*}
\end{proof}

\begin{example}\label{ex:1}

Let us take $\phi$ to be the smooth solution of the following heat equation
\begin{align*}
\begin{cases}
\pa_t \phi+ \frac{1}{2} \Delta \phi=0 & \text{on $\mathbb{R}^K \times [0,1)$ }\\
\phi(1, x)= f(x) & \text{on $\mathbb{R}^K \times \{1\}$ }. 
\end{cases}
\end{align*}
It can be easily verified as in \cite[Proposition 19]{10.5555/3546258.3546330} that $\phi$ satisfied \eqref{eq:upperderivative}. According to \cite[Appendix F.1]{pmlr-v125-kobzar20a}, we know that $D^2_{x^lx^k} \phi(t,x) >0$ if $l=k$ and $D^2_{x^lx^k} \phi(t,x) <0$ if $l \not =k$. Therefore for any $i \in [K]$ and $j \subset [K]$, we have that 
\begin{align*}
\frac{1}{2}Tr \left(D^2_{xx} \phi(t,x) \left(  \1_{i\in j}e_{j^c}e_{j^c}^\top+\1_{i\notin j}e_{j}e_{j}^\top\right)  \right) \leq \frac{1}{2} \Delta \phi(t,x), 
\end{align*} 
and hence 
\begin{align*}
\frac{1}{2}\sup_{i, a\in \Pc(\{0,1\}^K)} Tr\left(D^2_{xx} \phi\left(t,x\right)\left(\sum_{j}{a(j)}\left( \1_{i\in j}e_{j^c}e_{j^c}^\top+\1_{i\notin j}e_{j}e_{j}^\top \right)\right)\right) \leq \frac{1}{2} \Delta \phi(t,x).
\end{align*}
Thus $\phi$ is a smooth supersolution of \eqref{eq:simplepde} which satisfies \eqref{eq:upperderivative} according to \cite{pmlr-v125-kobzar20a}. By Feynman-Kac formula, we have $\phi(0,x)=\E_x[f(N^1,N^2,\dotso,N^K)]$ where $N^i$ is a standard normal. Supposing $x=(0,\dotso,0)$, then by Jensen's inequality  we have that for any $t \geq 0$
\begin{align*}
e^{t \E[f(N^1,\dotso,N^K)]} \leq \E[e^{tf(N^1,\dotso,N^K)} ] \leq K \E[e^{tN^1}]=Ke^{t^2/2},
\end{align*}
and hence $\E[f(N^1,\dotso,N^K)] \leq \frac{\log K}{t} + \frac{t}{2}$. Choosing $t =\sqrt{2 \log K}$, we obtain that $\phi(0,0) \leq \sqrt {2 \log K}$. Therefore, when initial belief is $\delta_{0}$, in our game where both agents have partial information, the asymptotic regret is bounded above by $\sqrt{2T\log K }$. It is smaller than the expected regret $5.15\sqrt{TK \log K }+\sqrt{\frac{TK}{\log K}}$in the case of adversarial bandit where both agents only observe $Y_t$ \cite[Theorem 3.4]{BubeckCesaBianchi2012}. The regret bound we obtain is two times larger than the performance of multiplicative weight algorithms obtained in \cite{gravin2017tight}.

\end{example}

\begin{rmk}
Our main contribution in terms of regret bound is to extend the PDE based methodology of \cite{pmlr-v125-kobzar20a} to the version bandit problems we study. In Lemma \ref{lem:simplesuper}, this bound is obtained by considering a functional linear in $m$ in the sense that $\Phi(t,m)=\int \phi(t,x)m(dx)$. 
Similar to \cite{pmlr-v125-kobzar20a}, the PDE tools are expected to yield sharper bounds by considering more sophisticated supersolutions to \eqref{eq:simplepde}. 

For example, any solution of 
\begin{align}\label{eq:originalPDE2}
0&=\pa_t  u(t,m) +\sup_{i,a\in \Pc(\{0,1\}^K)}  \\
&+\frac{1}{2}\left(\Vc_{a,i}^\top D^2_{mm} u\left(t,m,[m],[m]\right)\Vc_{a,i}+\sum_{j:i\in j}\frac{a(j)}{\hat a(i)}e_{j^c}^\top D_{x}D_m u\left(t,m,[m]\right)e_{j^c}\right)\notag\\
&+\frac{1}{2} \left(\Vc_{a,-i}^\top D^2_{mm} u\left(t,m,[m],[m]\right)\Vc_{a,-i}+\sum_{j:i\notin j}\frac{a(j)}{\hat a(-i)}  e_{j}^\top D_{x}D_m u\left(t,m,[m]\right)e_{j}\right) \notag
\end{align}
is a supersolution of \eqref{eq:simplepde}. 
For all $i\in [K]$ and $ a\in \Pc(\{0,1\}^K)$, we can define the symmetric matrices
\begin{align*}
\Sigma(i,a)&=\left({a(e_i)}\Vc_{a,e_i}\Vc_{a,e_i}^\top+{a(-e_i)} \Vc_{a,-e_i}\Vc_{a,-e_i}^\top\right)\\
\tilde \Sigma(i,a)&=\left({a(e_i)}\sum_{j:i\in j}\frac{a(j)}{a(e_i)}e_{j^c}^\top e_{j^c}+{a(-e_i)} \sum_{j:i\notin j}\frac{a(j)}{a(-e_i)}  e_{j}^\top e_{j}\right)-\Sigma(i,a). \notag
\end{align*}
By computing $v^\top \Sigma(i,a)v$ and $v^\top \tilde \Sigma(i,a)v$ for $v\in \R^K$, one can show that these matrices are non-negative. Thus, \eqref{eq:originalPDE2} can be written as the Hamilton-Jacobi-Bellman equation
\begin{align}
0&=\pa_t  u(t,m)+\frac{1}{2}\sup_{i, a\in \Pc(\{0,1\}^K)}  Tr\left(\Hc  u\left(t,m\right)\Sigma\left(i,a\right)+D_{x}D_m u\left(t,m,[m]\right)\tilde \Sigma\left(i,a\right)\right)\label{eq:wpde3}
\end{align}
where in line with \cite{chow2019partial}, the term $$\Hc U(t,m):=\int D_xD_m U(t,m,x)m(dx)+\int\int D^2_{mm} U(t,m,x,y)m(dx)m(dy)$$ is the so-called the Wasserstien Hessian of $U(t,\cdot)$.
A simple computation shows that the value function corresponding to a controlled version of \cite[Equation (1.8)]{chow2019partial} would yield to a viscosity solution to \eqref{eq:wpde3}; see \cite[Remark 3.5]{chow2019partial}. Then, this value function can be used as a supersolution of \eqref{eq:simplepde} (which would indeed depend nonlinearly on $m$). However such a methodology requires a comparison result for viscosity solutions of \eqref{eq:wpde3} (or smoothness of the value function) to obtain regret bounds. This comparison result and computation of improved regret bounds via a nonlinear $\Phi$ is being addressed by the authors on an ongoing work.

\end{rmk}
\section{Lower bound by smooth subsolution of the PDE}
As in the last section, we construct strategies for the adversary using smooth subsolutions of \eqref{eq:originalPDE}. Recall that $\mathcal{E}$ is the set of balanced strategies defined in Remark~\ref{rmk:balanced}. The proof of following lemma is almost the same as Lemma~\ref{lem:simplesuper} and thus we omit it. 

\begin{lemma}\label{lem:simplesub}
Let $ \phi$ be a smooth solution of
\begin{equation*}
\begin{split}
&0 \leq \pa_t   \phi(t,x)+\frac{1}{2}\inf_{i} \, Tr\left(D^2_{xx}\phi\left(t,x\right)\left(\sum_{j}{a_t(j)}\left( \1_{i\in j}e_{j^c}e_{j^c}^\top+\1_{i\notin j}e_{j}e_{j}^\top \right)\right)\right)  \\
&\phi(1,x) \leq \max_i x^i, \, \, \phi(t,x + \lambda \1)=\phi(t,x)+\lambda,
\end{split}
\end{equation*}
where $a_t \in \mathcal{E}, \, t \in [0,1], m \in \Pc(\R^K)$ are balanced strategies. Then, the function $\Phi:[0,1]\times \Pc_2(\R^K)\mapsto \R$ defined by 
$$ \Phi(t,m)=\phi(t,[m])=\int \phi(t,x) \, m(dx)$$
is a smooth subsolution to \eqref{eq:originalPDE}.
\end{lemma}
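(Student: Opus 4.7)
The plan is to mirror the proof of Lemma \ref{lem:simplesuper}, simply reversing every inequality and replacing the $\sup_{i,a}$ by the pair $(i,a_t)$ that the hypothesis singles out. First I would record the Wasserstein derivatives of $\Phi(t,m) = \phi(t,[m])$, which are identical to those in \eqref{eq:derreduced}: $D_m\Phi(t,m,x) = D_x\phi(t,x)$, $D_xD_m\Phi(t,m,x) = D^2_{xx}\phi(t,x)$, and $D^2_{mm}\Phi \equiv 0$, together with $\pa_t\Phi(t,m) = \pa_t\phi(t,[m])$. The terminal inequality $\Phi(1,m) \leq \int \max_i x^i \, m(dx)$ is immediate from $\phi(1,x) \leq \max_i x^i$, and the translation property $\Phi(t, m_{\sharp \lambda\1}) = \Phi(t,m) + \lambda$ is inherited from that of $\phi$.

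Next I would integrate the pointwise inequality against $m$. With the shorthand $M_i(a) := \sum_j a(j)\left(\1_{i\in j}e_{j^c}e_{j^c}^\top + \1_{i\notin j}e_je_j^\top\right)$, the hypothesis reads $0 \leq \pa_t\phi(t,x) + \frac{1}{2}\inf_i Tr(D^2_{xx}\phi(t,x)\, M_i(a_t))$ for every $x$. Integrating in $x$ against $m$, and using the elementary bound $\int \inf_i f_i(x)\, m(dx) \leq \inf_i \int f_i(x)\, m(dx)$, yields
\begin{equation*}
0 \leq \pa_t\Phi(t,m) + \frac{1}{2}\inf_i Tr\left(D_xD_m\Phi(t,m,[m])\, M_i(a_t)\right).
\end{equation*}

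The key step is to upgrade this infimum to the probability-weighted sum appearing in \eqref{eq:originalPDE}. The translation invariance gives $\sum_i \pa_{x^i}\phi(t,x) = 1$, hence $\sum_i u_i(t,m) = 1$, where $u_i$ denotes the $i$th component of $D_m\Phi(t,m,[m])$; assuming the natural monotonicity $\pa_{x^i}\phi \geq 0$ (paralleling the discussion following \eqref{eq:minimax}), one also has $u_i \geq 0$, so $\inf_i T_i \leq \sum_i u_i T_i$ for any reals $T_i$. Applied to $T_i = Tr(D_xD_m\Phi\, M_i(a_t))$ this produces
\begin{equation*}
0 \leq \pa_t\Phi(t,m) + \frac{1}{2}\sum_i u_i(t,m)\, Tr\left(D_xD_m\Phi(t,m,[m])\, M_i(a_t)\right).
\end{equation*}

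Because $D^2_{mm}\Phi = 0$, a short computation (expanding $\hat a_t(i)$ and $\hat a_t(-i)$ back into the trace) shows that the right-hand side coincides with the value at $a=a_t$ of the bracket inside the $\sup_a$ in \eqref{eq:originalPDE}; lower-bounding this specific choice by the supremum over $a \in \Pc(\{0,1\}^K)$ then delivers the desired subsolution inequality. The balanced assumption $a_t \in \mathcal{E}$ is not strictly used in this bookkeeping, but it is the hypothesis under which an adversary replaying $a_t$ annihilates the $\sqrt{T}$ first-order term identified in \eqref{eq:minimax} for every forecaster randomization, and thereby becomes an effective strategy in the lower-bound proposition analogous to Proposition \ref{prop:upperbound}. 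The only delicate point in the verification is thus the simplex structure of $u$, which legitimizes the passage from $\inf_i$ to the weighted sum; everything else is a sign-flipped rerun of Lemma \ref{lem:simplesuper}.
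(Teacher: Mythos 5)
Your proof is correct and follows exactly the route the paper intends: the paper omits this proof with the remark that it mirrors Lemma~\ref{lem:simplesuper}, and your argument is precisely that sign-flipped rerun, integrating the pointwise inequality, passing $\inf_i$ through the integral, upgrading $\inf_i$ to the $u_i$-weighted average using the simplex structure of $D_m\Phi(t,m,[m])$, and finally lower-bounding the specific $a=a_t$ by $\sup_a$. Your side remarks — that the nonnegativity $\pa_{x^i}\phi\geq 0$ is implicitly needed here just as in Lemma~\ref{lem:simplesuper}, and that the balanced hypothesis on $a_t$ is not used in the verification itself but only in Proposition~\ref{prop:simplelowerbound} — are both accurate.
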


\begin{rmk}
Note that in Lemma~\ref{lem:simplesub}, the choice of balanced strategies $a_t$ only depends on time $t$. 
\end{rmk}

Given balanced strategies $(a_t)_{t \in [0,1]}$ and subsolution $\phi$ as in Lemma~\ref{lem:simplesub}, we construct strategies for the adversary in the original game \eqref{eq:minmax}. For a large time horizon $T$. Let us denote $t_n=\frac{n}{T}$, where $n$ is the current step. We define a strategy $\a^*$ of the adversary via
\begin{align*}
\a^*_n=a_{t_n}, \quad n=0,\dotso, T-1. 
\end{align*}

\begin{proposition}\label{prop:simplelowerbound}
Suppose $(a_t)_{t \in [0,1]}$, $\phi$ are balanced strategies and classical solutions as in Lemma~\ref{lem:simplesub} that satisfies \eqref{eq:upperderivative}.Let $m_0$ be the initial belief. Then the strategy $\a^*$ of the adversary defined yields regret bounded below by $\sqrt{T}  \phi\left(0, \left[  m_0^{* \frac{1}{\sqrt T}}\right]\right)$ asymptotically. 
\end{proposition}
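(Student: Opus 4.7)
The plan is to mirror Proposition~\ref{prop:upperbound}, swapping the roles of supremum and infimum and using the balanced property of $\a^*$ in place of the optimal choice $b^*=D_m\Phi$ that killed the first-order term there. I would start from the minimax inequality $v_T(0,m_0)\geq \sup_\alpha\inf_\beta\gamma_T(0,m_0,\alpha,\beta)\geq \inf_\beta\E^{\a^*,\beta}[f([m_T])]$, so after the $\sqrt T$-rescaling it suffices to prove
\[
\liminf_{T\to\infty}\Bigl(\inf_\beta\E^{\a^*,\beta}[f([\widetilde m_T])]-\phi(0,[\widetilde m_0])\Bigr)\geq 0.
\]
The terminal inequality $\phi(1,x)\leq\max_i x^i=f(x)$ from Lemma~\ref{lem:simplesub} lets me replace $f([\widetilde m_T])$ by $\phi(1,[\widetilde m_T])$ when proving a lower bound, and $\phi(1,[\widetilde m_T])-\phi(0,[\widetilde m_0])$ then becomes a telescoping sum whose one-step increments I would estimate individually.

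For each increment I apply verbatim the Taylor expansion that produced \eqref{eq:ff}: since $\Phi(t,m)=\phi(t,[m])$ is linear in $m$, the derivative identities \eqref{eq:derreduced} still hold, and the computation in Proposition~\ref{prop:upperbound} only uses the structure of $A^{a,m}_{\pm i,\sqrt T}$ together with the smoothness assumption \eqref{eq:upperderivative}. With $a$ now prescribed as the balanced strategy $a_{t_n}$ and the forecaster's response $b\in\Pc([K])$ left free, this produces a $1/\sqrt T$ first-order term, a $1/T$ second-order term, and the same remainder \eqref{eq:errorterm}.

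The key step that replaces the upper-bound choice $b^*=D_m\Phi$ is the identity \eqref{eq:minimax}. Combined with the homogeneity $\phi(t,x+\lambda\1)=\phi(t,x)+\lambda$, which forces $\1^\top D_x\phi(t,x)=1$, and the balanced property $\sum_{j:i\in j}a_{t_n}(j)=c_n$ independent of $i$, the first-order term collapses to $\frac{c_n}{\sqrt T}\sum_i(\pa_{x^i}\phi(t_n,[m])-b(i))=0$ for every $b$. Thus the adversary's balanced strategy annihilates the dominant $1/\sqrt T$ contribution \emph{uniformly} in $\beta$, which is the asymmetry that makes the lower bound go through. For the second-order term, since $b$ is a probability measure,
\[
\pa_t\phi+\tfrac12\sum_i b(i)\,Tr\bigl(D^2_{xx}\phi\cdot M_{i,a_{t_n}}\bigr)\geq \pa_t\phi+\tfrac12\inf_i Tr\bigl(D^2_{xx}\phi\cdot M_{i,a_{t_n}}\bigr)\geq 0
\]
by the subsolution inequality of Lemma~\ref{lem:simplesub}, with $M_{i,a_{t_n}}=\sum_j a_{t_n}(j)(\1_{i\in j}e_{j^c}e_{j^c}^\top+\1_{i\notin j}e_j e_j^\top)$.

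Putting the pieces together, each conditional increment $\E^{\a^*,b}[\phi(t_{n+1},[\widetilde m_{n+1}])-\phi(t_n,[\widetilde m_n])\mid \widetilde m_n=m]$ is bounded below by $-|O(T,n)|$ pointwise in $b$, and the tower property together with the already-established control $\sum_n|O(T,n)|\to 0$ from the proof of Proposition~\ref{prop:upperbound} yields the desired lower bound after summing and taking $\inf_\beta$. The main obstacle I anticipate is just the first-order vanishing: one must recognise that combining \eqref{eq:minimax}, the balanced constant $c_n$, and $\1^\top D_x\phi=1$ kills the $1/\sqrt T$ term for \emph{every} response $b$, not only for a specific optimizer. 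Once this is secured, the rest is a direct transcription of the proof of Proposition~\ref{prop:upperbound} with reversed inequalities.
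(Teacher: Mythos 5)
Your proposal is correct and follows the same route the paper takes: mirror the telescoping one-step Taylor expansion from Proposition~\ref{prop:upperbound}, observe that the balanced choice $\a^*_n=a_{t_n}$ together with $\1^\top D_x\phi=1$ annihilates the $1/\sqrt T$ term \emph{for every} $b\in\Pc([K])$ (this is exactly what the paper means by ``the first order term vanishes since $\a^*_n$ is balanced''), and invoke the subsolution inequality for the $1/T$ term before summing the remainders $O(T,n)$. Your write-up is more explicit than the paper's one-paragraph proof, in particular in spelling out the role of \eqref{eq:minimax} and the terminal inequality $\phi(1,\cdot)\le f$, but the argument is the same.
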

\begin{proof}
The argument is almost the same as that of Proposition~\ref{prop:upperbound}. Just notice that \eqref{eq:ff} now becomes
\begin{equation*}
\begin{split}
& \E^{b,\a^*}\left[ {\phi}(t_{n+1}, [\widetilde{m}_{n+1} ])- \phi(t_{n}, [\widetilde{m}_{n}])  \, | \, \widetilde{m}_{n}=m \right] \\
& \geq \frac{1}{\sqrt T}\sum_i b(i) \left( \sum_{j, i \not \in j} \a^*_n(j) e_j^\top-\sum_{j:i \in j} \a^*_n(j)e_{j^c}^\top   \right) \pa_x \phi(t_n,[m])+\frac{1}{T} \times  \\
& \left(\pa_t \phi(t_n, [m])+\frac{1}{2}\sum_{i} b(i) Tr\left(D^2_{xx}\phi\left(t_n,[m]\right)\left(\sum_{j}{\a_n^*(j)}\left( \1_{i\in j}e_{j^c}e_{j^c}^\top+\1_{i\notin j}e_{j}e_{j}^\top \right)\right)\right) \right)  \\
&+O(T,n),
\end{split}
\end{equation*}
where $O(T,n)$ is defined in \eqref{eq:errorterm}. The first order term on the right hand vanishes since $\a^*_n$ is balanced, and second order term is nonnegative due to the subsolution property of $\phi$. Thus we obtain that $\E^{b,\a^*}\left[ {\phi}(t_{n+1}, [\widetilde{m}_{n+1} ])- \phi(t_{n}, [\widetilde{m}_{n}])  \, | \, \widetilde{m}_{n}=m \right] \geq O(T,n)$. Then summing up from $n=0$ to $T-1$, taking infimum over $\b \in \mathcal{B}$, and letting $T \to \infty$, we conclude our result. 

\end{proof}

\begin{example}
Let us take $a_t$ to be the uniformly distribution over $\{0,1\}^K$ for each $t \in [0,1]$. Then it can be easily verified that 
\begin{align*}
\sum_{j}{a_t(j)}\left( \1_{i\in j}e_{j^c}e_{j^c}^\top+\1_{i\notin j}e_{j}e_{j}^\top \right) = \frac{1}{4} e_{[K]} e_{[K]}^\top  + \frac{1}{4} I_K, \quad \forall \, i \in [K].
\end{align*} 
where $I_K$ stands for the identity matrix of dimension $K \times K$.

Let us take $\phi$ to be the smooth solution of the following heat equation
\begin{align*}
\begin{cases}
\pa_t \phi+ \frac{1}{8} \Delta \phi=0 & \text{on $\mathbb{R}^K \times [0,1)$ }\\
\phi(1, x)= f(x) & \text{on $\mathbb{R}^K \times \{1\}$ }. 
\end{cases}
\end{align*}
It can be easily seen that such $\phi$ and $(a_t)_{t \in [0,1]}$ satisfy all the assumptions in Proposition~\ref{prop:simplelowerbound}. Therefore when initial belief is $\delta_{0}$, the asymptotic asymptotic regret is bounded below by $\sqrt{T} \phi(0,0)$. By Feynman-Kac formula, $\phi(0,0)=\E[f(N^1,\dotso, N^K)]$ where $N^i$ is gaussian distributed with mean $0$ and variance $1/4$ for each $i=1,\dotso, K$. Then according to \cite[Theorem 3]{2015arXiv151102176O}, we obtain a lower bound $\phi(0,0) \geq 0.065 \sqrt {\log K}-0.35$. 

\end{example}

\bibliographystyle{siam}
\bibliography{ref.bib}

\end{document}